\theoremstyle{plain} 
\newtheorem{theorem}{Theorem}[section]
\newtheorem{proposition}[theorem]{Proposition}
\newtheorem{lemma}[theorem]{Lemma}
\newtheorem{corollary}[theorem]{Corollary}
\newtheorem{remark}[theorem]{Remark}
\title[Elephant random walk with a power law memory]{Phase transitions for a unidirectional elephant random walk with a power law memory} 
\thanks{R.R. thanks Keio University for its hospitality during multiple visits. M.T. and H.T. thank the Indian Statistical Institute for its hospitality. M.T. is partially supported by JSPS KAKENHI Grant Numbers JP19H01793, JP19K03514 and JP22K03333. H.T. is partially supported by JSPS KAKENHI Grant Number JP19K03514. 
}
\author{Rahul Roy}
\address{Indian Statistical Institute, New Delhi, India}
\email{rahul@isid.ac.in}
\author{Masato Takei}
\address{Department of Applied Mathematics, Faculty of Engineering, Yokohama National University, Yokohama, Japan}
\email{takei-masato-fx@ynu.ac.jp}
\author{Hideki Tanemura}
\address{Department of Mathematics, Keio University, Yokohama, Japan}
\email{tanemura@math.keio.ac.jp}
\begin{document}

\begin{abstract}
For the standard elephant random walk, Laulin (2022) studied the case when the increment of the random walk is not uniformly distributed over the past history instead has a power law distribution.
We study such a problem for the unidirectional elephant random walk introduced by Harbola, Kumar and Lindenberg (2014).
Depending on the memory parameter $p$ and the power law exponent $\beta$, we obtain three distinct phases in one such phase the elephant travels only a finite distance almost surely, and the other two phases are distinguished by the speed at which the elephant travels. 
\end{abstract}

\maketitle

\section{Introduction}
\label{sec:intro}
The elephant random walk (ERW),  introduced by Sch\"{u}tz and Trimper \cite{SchutzTrimper04}, is a one-dimensional discrete time random walk whose incremental steps are $\pm 1$. Unlike the simple random walk, the ERW keeps a track of each of the incremental steps taken throughout its history. Formally, let $S_n$ be the location of the ERW at time $n$. We have $S_0 \equiv 0$ and, for $s \in [0,1]$,
\begin{align}
\label{r-1st step}
S_1 &= X_1 := \begin{cases}
+1 & \text{ with probability }s\\
-1 &  \text{ with probability }1-s.
\end{cases}
\end{align}
Subsequently, for $n \geq 1$, 
$$
S_{n+1} = S_n + X_{n+1}
$$
where, for  $p \in (0,1)$ and $U_n$, a uniform random variable on $\{1, \ldots , n\}$,  
\begin{align}
\label{r-increments}
X_{n+1} &= \begin{cases}
+X_{U_n} & \text{ with probability }p\\
-X_{U_n} &  \text{ with probability }1-p.
\end{cases}
\end{align}
Here we assume that $\{U_k: k \in \mathbb{N} \}$ is a collection of independent random variables.

Depending on the {\it memory parameter}\/ $p$, we observe three distinct phases,
\begin{itemize}
\item [(i)] when $0 < p < 3/4$, the ERW exhibits a diffusive phase where 
$$
\frac{S_n}{\sqrt{n}} \overset{\text{d}}{\to} N\left(0, \frac{1}{3-4p}\right) \qquad \text{Coletti {\it et al.}\/ \cite{Collettietal17a},}
$$
\item[(ii)] when $p= 3/4$, the ERW exhibits a critical phase where  
$$
\frac{S_n}{\sqrt{n\log n}} \overset{\text{d}}{\to} N(0, 1)  \qquad \text{Coletti {\it et al.}\/ \cite{Collettietal17a},}
$$
\item[(iii)] when $3/4 < p < 1$, the ERW exhibits a superdiffusive phase where, for a finite, non-degenerate, non-Gaussian random variable $L$,
$$
\lim_{n \to \infty} \dfrac{S_n}{n^{2p-1}} = L\quad \mbox{a.s. and in $L^2$} \qquad \text{Bercu \cite{Bercu18},}
$$
and
$$
\frac{S_n - Ln^{2p-1}}{\sqrt{n}} \overset{\text{d}}{\to} N\left(0,  \frac{1}{4-3p}\right) \qquad \text{Kubota and Takei \cite{KubotaTakei19JSP}.}
$$
\end{itemize}
It should be noted here that, like the simple random walk, the ERW is also Markovian, albeit inhomogeneous, because
$$
S_{n+1} = \begin{cases}
S_n +1 & \text{ with probability }\frac{1}{2} + (p - \frac{1}{2})\frac{S_n}{n}\\
S_n -1&  \text{ with probability }\frac{1}{2} - (p - \frac{1}{2})\frac{S_n}{n}.
\end{cases}
$$

Other properties of the ERW as well as its multidimensional version have been studied, see the excellent thesis of Laulin \cite{Laulin22PhD} and the references therein. More recently Bertoin \cite{Bertoin22Counting}, Gu\'{e}rin {\it et al.} \cite{GuerinLaulinRaschel23} and  Qin \cite{Qin23} have studied the recurrence and transience properties of the ERW and its higher dimensional analogues.


Gut and Stadm\"{u}ller  \cite{GutStadtmuller21JAP} considered the following two models. Let $\{U_k : k \in \mathbb{N}\}$ be independent uniform random variables as earlier. Fix $m \in \mathbb{N}$.
\begin{itemize}
\item Remembering the distant past of size $m$: Let $\{U_{m,n} : n \geq m\}$ be i.i.d. random variables, independent of $\{U_k : k \in \mathbb{N}\}$ with $U_{m,n} \overset{\text{d}} {=} U_m$ for $n  \geq m$. The ERW is described by the increments (\ref{r-increments}) for $n < m$, and, for $n \geq m$,  with 
\begin{equation}
\label{r-GSinitial}
X_{n+1} = \begin{cases}
+X_{U_{m,n}} & \text{ with probability }p\\
-X_{U_{m,n}} &  \text{ with probability }1-p.
\end{cases}
\end{equation}
\item Remembering the recent past of size $m$: Let $\{U^m_{n} : n \geq m\}$ be a collection of independent random variables, independent of  $\{U_k : k \in \mathbb{N}\}$ with 
$U_n^m$ having a uniform distribution over $\{n-m+1, \ldots, n\}$. The ERW is described by the increments (\ref{r-increments}) for $n < m$,  and, for $n \geq m$, with increments 
\begin{align}
\label{r-GSinitial2}
X_{n+1} &= \begin{cases}
+X_{U^m_n} & \text{ with probability }p\\
-X_{U^m_n} &  \text{ with probability }1-p.
\end{cases}
\end{align}
\end{itemize} 
\begin{remark}
$U_1 \equiv 1$, so for $m=1$, the increments given by \eqref{r-GSinitial} can be seen to be i.i.d. For $m \geq 2$, the process which remembers the distant past exhibits a superdiffusive behaviour.
However the process which remembers the recent past exhibits a diffusive behaviour for $m \geq 1$.
\end{remark}

Laulin \cite{Laulin22ECP} continued the study further.
Let $\beta > 0$ and $\{\beta_{n+1}: n \in \mathbb{N} \}$ be independent random variables with 
\begin{align}
\label{r-beta1}
P(\beta_{n+1} = k) = \begin{cases} \dfrac{\beta + 1}{n} \cdot \dfrac{\mu_k}{\mu_{n+1}} & \text { for } 1 \leq k \leq n\\
0  & \text{ otherwise}
\end{cases}
\end{align}
where
\begin{equation}
\label{r-mu1}
\mu_n = \frac{\Gamma(n+ \beta)}{\Gamma(n)\Gamma( \beta +1)} \sim \frac{n^\beta}{\Gamma(\beta +1)} \quad \mbox{as $n \to \infty$.}
\end{equation}
The increments are given by
\begin{align}
\label{r-beta1st step}
X_1 := \begin{cases}
+1 & \text{ with probability }s\\
-1 &  \text{ with probability }1-s,
\end{cases}
\end{align}
for some $s \in [0,1]$, and, for  $p \in (0,1)$, $n \geq 1$, 
\begin{align}
\label{r-betaincrements}
X_{n+1} &= \begin{cases}
+X_{\beta_{n+1}} & \text{ with probability }p\\
-X_{\beta_{n+1}} &  \text{ with probability }1-p.
\end{cases}
\end{align}
Laulin \cite{Laulin22ECP} showed that the ERW obtained from the increments described above exhibits a phase transition in the sense that, it is
$$
\text{diffusive for } p < \frac{4\beta + 3}{4(\beta + 1)} \quad \text{ and superdiffusive for } p >\frac{4\beta + 3}{4(\beta + 1)}.
$$
See Chen and Laulin \cite{ChenLaulin23} for higher-dimensional analogues.


Harbola {\it et al.} \cite{HarbolaEtal14PRE} introduced a unidirectional ERW where the first step is 
\begin{align}
\label{r-1st step2}
S_1 &= X_1 := \begin{cases}
+1 & \text{ with probability }s\\
0 &  \text{ with probability }1-s,
\end{cases}
\end{align}
and subsequently, with the collection $\{U_k: k \in \mathbb{N} \}$ as earlier, 
\begin{align}
\label{r-lazy1}
\begin{split}
\text{if } X_{U_n} = 1 \text{ then } & X_{n+1} := \begin{cases}
+1 & \text{ with probability }p \\
0 &  \text{ with probability }1-p,
\end{cases}\\
\text{if } X_{U_n} = 0 \text{ then } & X_{n+1} := \begin{cases}
+1 & \text{ with probability } q\\
0 &  \text{ with probability }1-q,
\end{cases}
\end{split}
\end{align}
here $s \in [0,1]$, $p \in (0,1)$ and $q \in [0,1]$ are the parameters of the process.
Harbola {\it et al.} \cite{HarbolaEtal14PRE} showed that, for the  random walk $\Xi _n := \sum_{k=1}^n X_k$, with $\{X_k : k \in \mathbb{N}\}$ as above,
$$
E[\Xi_n] \sim \begin{cases} \dfrac{qn}{1-q} & \text{ if } q > 0\\
\dfrac{sn^p}{\Gamma(1+p)} & \text{ if } q = 0,
\end{cases}
$$
and Coletti {\it et al.} \cite{CollettiGavadeLima19JSM} observed that for $q>0$, there are three distinct phases depending on $p-q$.

When $q=0$ and $s=1$ the walk is the `laziest elephant random walk (LERW)', which is thus given by $\Xi_0=0$, 
\begin{align}
\label{r-lazydef}
 \Xi _n := \sum_{k=1}^n X_k \text{ with } 
 X_1 \equiv 1,\,X_{n+1} = \begin{cases}
X_{U_n} & \text{with probability }p\\
0 &  \text{with probability }1-p.
\end{cases}
\end{align}
Miyazaki and Takei \cite{MiyazakiTakei20JSP}  studied this model and showed that
$$
\frac{\Xi_n}{n^p} \overset{\text{d}}{\to}  {\mathcal W}, \text{ where $\mathcal W$ has a Mittag--Leffler distribution with parameter $p$,}
$$ 
and they also proved that
$$
\frac{\Xi_n - {\mathcal W}n^p}{\sqrt{{\mathcal W}n^p}} \overset{\text{d}}{\to} N(0,1).
$$


Our study here is similar to Laulin \cite{Laulin22ECP}, however for the LERW given in (\ref{r-lazydef}) and having a memory distribution  (\ref{r-beta1}) with $\beta >-1$.
In particular, our model is given by $\Xi_0=0$,
\begin{align}
\label{r-lazydef2}
\Xi _n := \sum_{k=1}^n X_k \text{ with } 
 X_1 \equiv 1,\, X_{n+1} = \begin{cases}
X_{\beta_{n+1}} & \text{with probability }p\\
0 &  \text{with probability }1-p,
\end{cases}
\end{align}
where $\{\beta_{n+1} : n \in \mathbb{N} \}$ are as given in (\ref{r-beta1}) with $\beta > -1$.

For this model we obtain three distinct rates of growth of $\Xi_n$ depending on the parameter $\beta$. We employ a martingale method and a coupling with a multi-type branching process. The latter is of intrinsic interest because it may be applied to other ERW models. In the next section we state our results and in subsequent sections we 
present the proofs of our results.

\section{Results}
\label{sec:results}

Throughout the rest of the article we restrict ourselves to the model given by \eqref{r-lazydef2}.
Let $\mathcal{F}_0$ be the trivial $\sigma$-field, and $\mathcal{F}_n$ be the $\sigma$-field generated by $X_1,\ldots,X_n$. By \eqref{r-beta1}, we have
\begin{align}
E[X_{n+1} \mid \mathcal{F}_n] &=  p \cdot  E[ X_{\beta_{n+1}} \mid \mathcal{F}_n] \notag \\
&= p \cdot \sum_{k=1}^n X_k P(\beta_{n+1}=k) 
= \dfrac{p(\beta+1)}{n\mu_{n+1}} \sum_{k=1}^n X_k \mu_k. \label{eq:CondEX_n+1}
\end{align}
We introduce an auxiliary process $\{\Sigma_n: n \in \mathbb{N}\}$ 
defined as 
\begin{align*}
\Sigma_n := \sum_{k=1}^n X_k \mu_k \quad \mbox{for $n \in \mathbb{N}$.}
\end{align*} 
Note that $\Sigma_1 = X_1 \mu_1=1$ a.s. Since
\begin{align}
\label{X_n+1}
E[X_{n+1} \mid \mathcal{F}_n] &= \dfrac{p(\beta+1)}{n\mu_{n+1}} \cdot \Sigma_n,
\end{align}
we have
\begin{align*}
E[\Sigma_{n+1} \mid \mathcal{F}_n] &= \Sigma_n + E[X_{n+1} \mu_{n+1}\mid \mathcal{F}_n] 
= \left(1+\dfrac{p(\beta+1)}{n}\right) \Sigma_n.
\end{align*}
For $\xi > -1$, let
\begin{align}
 c_n(\xi) := \dfrac{\Gamma(n+\xi)}{\Gamma(n)\Gamma(\xi+1)} \sim \dfrac{n^{\xi}}{\Gamma(\xi+1)}\quad\mbox{as $n \to \infty$.}
 \label{def:c_n(xi)}
\end{align}
Note that $\mu_n = c_n(\beta)$. Put
\begin{align}
\label{def:MnMart}
M_n := \dfrac{\Sigma_n}{c_n(p(\beta+1))}.
\end{align}
Since $\{M_n\}$ is a non-negative martingale, there exists a non-negative random variable $M_{\infty}$ such that
\[ \lim_{n \to \infty}M_n 
= M_{\infty} \quad \mbox{a.s.} \]
As a consequence,
\begin{align}
E[\Sigma_n] = c_n(p(\beta+1)) \cdot E[\Sigma_1] = c_n(p(\beta+1)),
\label{eq:EofSigma_n} 
\end{align}
and
\begin{align}
E[\Sigma_n] \sim \dfrac{n^{p(\beta+1)}}{\Gamma(p(\beta+1))} \quad \mbox{as $n \to \infty$.} \label{eq:EofSigma_nAsymp} 
\end{align}

Our main results are the following.

\begin{theorem} \label{thm:EofXi_n} For $\beta>-1$, 
\begin{align*}
E[\Xi_n] = \begin{cases}
\dfrac{p(\beta+1)}{p(\beta+1)-\beta} \cdot \dfrac{c_n(p(\beta+1))}{c_n(\beta)} + \dfrac{\beta}{\beta-p(\beta+1)} &\mbox{if $\beta \neq \dfrac{p}{1-p}$} \\
\displaystyle  \sum_{k=0}^{n-1} \dfrac{\beta}{k+\beta}&\mbox{if $\beta = \dfrac{p}{1-p}$.}
\end{cases}
\end{align*}
\end{theorem}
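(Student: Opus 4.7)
The plan is to take expectations in the conditional formula \eqref{X_n+1} and sum, using a telescoping identity built from the ratio $c_n(p(\beta+1))/c_n(\beta)$.

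First I would compute the unconditional expectation of each increment. Taking expectations in \eqref{X_n+1} and using \eqref{eq:EofSigma_n} gives
\begin{align*}
E[X_{k+1}] = \frac{p(\beta+1)}{k\,\mu_{k+1}}\,c_k(p(\beta+1))
\end{align*}
for $k \geq 1$, while $E[X_1]=1$. Using $\mu_{k+1}=c_{k+1}(\beta) = \frac{k+\beta}{k}\,c_k(\beta)$, this simplifies to
\begin{align*}
E[X_{k+1}] = \frac{p(\beta+1)}{k+\beta}\cdot\frac{c_k(p(\beta+1))}{c_k(\beta)}.
\end{align*}
Write $f(k):=c_k(p(\beta+1))/c_k(\beta)$; note $f(1)=1$.

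Next I would find a telescoping identity for $f$. Since $c_{k+1}(\xi)/c_k(\xi)=(k+\xi)/k$, we get
\begin{align*}
f(k+1)-f(k) = \left(\frac{k+p(\beta+1)}{k+\beta}-1\right)f(k) = \frac{p(\beta+1)-\beta}{k+\beta}\,f(k).
\end{align*}
In the regime $\beta \neq p/(1-p)$, i.e.\ $p(\beta+1)\neq\beta$, this rewrites as
\begin{align*}
E[X_{k+1}] = \frac{p(\beta+1)}{p(\beta+1)-\beta}\bigl(f(k+1)-f(k)\bigr),
\end{align*}
so summing over $k=1,\dots,n-1$ and adding $E[X_1]=1$ yields
\begin{align*}
E[\Xi_n] = 1+\frac{p(\beta+1)}{p(\beta+1)-\beta}\bigl(f(n)-1\bigr) = \frac{p(\beta+1)}{p(\beta+1)-\beta}\cdot\frac{c_n(p(\beta+1))}{c_n(\beta)}+\frac{\beta}{\beta-p(\beta+1)},
\end{align*}
after simplifying the constant. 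This is the first case.

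For the critical case $\beta=p/(1-p)$, we have $p(\beta+1)=\beta$, so $f(k)\equiv 1$ and the previous argument breaks down because the telescoping factor vanishes. Instead I use the simplified expression directly: $E[X_{k+1}]=\beta/(k+\beta)$, which together with $E[X_1]=1=\beta/\beta$ gives
\begin{align*}
E[\Xi_n] = \sum_{k=0}^{n-1}\frac{\beta}{k+\beta},
\end{align*}
matching the second case. The only genuine obstacle is algebraic bookkeeping with the Gamma ratios, in particular correctly reading off the telescoping constant; once the identity $E[X_{k+1}]=\tfrac{p(\beta+1)}{p(\beta+1)-\beta}(f(k+1)-f(k))$ is in hand, both cases follow immediately.
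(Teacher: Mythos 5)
Your proof is correct and follows essentially the same route as the paper: take expectations in \eqref{X_n+1} together with \eqref{eq:EofSigma_n} to get $E[X_{k+1}]$, then sum the increments and telescope. Your direct telescoping of the ratio $f(k)=c_k(p(\beta+1))/c_k(\beta)$ is just a repackaging of the paper's Lemma \ref{lem:Bercu18LemmaVariant} (itself proved via the Gamma-ratio identity \eqref{lem:Bercu18LemmaSeed}), and both cases, including the critical one $\beta=p/(1-p)$, are handled as in the paper.
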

\noindent 
As a corollary, from \eqref{def:c_n(xi)} we have
\begin{corollary} \label{cor:EofXi_n} (i) If $-1<\beta<p/(1-p)$ then
\begin{align*}
E[\Xi_n] \sim C(p,\beta) \cdot n^{p(\beta+1)-\beta} \quad \mbox{as $n \to \infty$,}
\end{align*}
where
\begin{align}
C(p,\beta):=\dfrac{1}{p(\beta+1)-\beta} \cdot \dfrac{\Gamma(\beta+1)}{\Gamma(p(\beta+1))}. \label{eq:DefCbeta}
\end{align}
\noindent (ii) If $\beta=p/(1-p)$ then
\begin{align*}
E[\Xi_n] \sim \beta \log n \quad \mbox{as $n \to \infty$.}
\end{align*}
\noindent (iii) If $\beta> p/(1-p)$ then 
\begin{align*}
E[\Xi_{\infty}] = \lim_{n \to \infty} E[\Xi_n] = \dfrac{\beta}{\beta-p(\beta+1)}.
\end{align*}
In particular, $\Xi_{\infty}<+\infty$ a.s.
\end{corollary}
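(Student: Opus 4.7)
All three parts follow directly from Theorem \ref{thm:EofXi_n} combined with the asymptotic $c_n(\xi) \sim n^{\xi}/\Gamma(\xi+1)$ recorded in \eqref{def:c_n(xi)}. The plan is to substitute this asymptotic into the exact formula and separate the three regimes $p(\beta+1)-\beta>0$, $=0$, and $<0$, which correspond exactly to the three hypotheses (i), (ii), (iii).

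For part (i), the hypothesis $-1<\beta<p/(1-p)$ is equivalent to $p(\beta+1)-\beta>0$, so the ratio $c_n(p(\beta+1))/c_n(\beta)$ grows like $n^{p(\beta+1)-\beta}\to\infty$, dominating the bounded additive constant $\beta/(\beta-p(\beta+1))$ in Theorem \ref{thm:EofXi_n}. Substituting \eqref{def:c_n(xi)} yields
\[
E[\Xi_n] \sim \frac{p(\beta+1)}{p(\beta+1)-\beta}\cdot\frac{\Gamma(\beta+1)}{\Gamma(p(\beta+1)+1)}\cdot n^{p(\beta+1)-\beta},
\]
and the functional equation $\Gamma(p(\beta+1)+1)=p(\beta+1)\,\Gamma(p(\beta+1))$ cancels the factor $p(\beta+1)$ from the numerator to produce exactly the constant $C(p,\beta)$ of \eqref{eq:DefCbeta}. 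This tiny gamma-function simplification is the only nontrivial point in the whole argument.

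For part (ii), the equality $\beta=p/(1-p)$ is equivalent to $p(\beta+1)=\beta$, so the second branch of Theorem \ref{thm:EofXi_n} applies and $E[\Xi_n]=\beta\sum_{k=0}^{n-1}(k+\beta)^{-1}$. A routine harmonic comparison, e.g.\ against $\int_{1}^{n}(x+\beta)^{-1}\,dx$ or against the harmonic numbers, shows $\sum_{k=0}^{n-1}(k+\beta)^{-1}=\log n+O(1)$, hence $E[\Xi_n]\sim\beta\log n$.

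For part (iii), the hypothesis $\beta>p/(1-p)$ gives $p(\beta+1)-\beta<0$, so $c_n(p(\beta+1))/c_n(\beta)\to 0$; the first term in Theorem \ref{thm:EofXi_n} vanishes in the limit, and only the constant $\beta/(\beta-p(\beta+1))$ survives. For the almost-sure finiteness, observe that $X_k\in\{0,1\}$ makes $\{\Xi_n\}$ nondecreasing, so $\Xi_\infty:=\lim_n\Xi_n$ exists in $[0,\infty]$; monotone convergence then gives $E[\Xi_\infty]=\lim_n E[\Xi_n]<\infty$, forcing $\Xi_\infty<\infty$ almost surely. I do not expect any substantial obstacle: the corollary is essentially a careful reading of Theorem \ref{thm:EofXi_n} through the three regimes determined by the sign of $p(\beta+1)-\beta$.
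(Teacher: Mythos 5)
Your proposal is correct and follows exactly the route the paper intends: the paper derives the corollary directly from Theorem \ref{thm:EofXi_n} together with the asymptotic $c_n(\xi)\sim n^{\xi}/\Gamma(\xi+1)$ of \eqref{def:c_n(xi)}, which is precisely your case analysis on the sign of $p(\beta+1)-\beta$, including the cancellation $\Gamma(p(\beta+1)+1)=p(\beta+1)\Gamma(p(\beta+1))$ and the monotone convergence argument for the almost-sure finiteness in (iii).
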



\begin{theorem}
\label{r-thmsmallbeta}
If $-1<\beta<p/(1-p)$ then $P(M_{\infty}>0) > 0$, and 
$$
\Xi_n \sim  C(p,\beta) M_{\infty} n^{p (\beta+1)-\beta}\quad \mbox{on $\{M_{\infty}>0\}$.}
$$
\end{theorem}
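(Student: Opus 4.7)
The plan has two phases. First, prove that the nonnegative martingale $\{M_n\}$ is bounded in $L^2$; this gives $L^1$-convergence $M_n \to M_\infty$, hence $E[M_\infty] = E[M_1] = 1$, and therefore $P(M_\infty > 0) > 0$. Second, transfer the resulting a.s.\ asymptotic for $\Sigma_n$ to one for $\Xi_n$ by a discrete integration by parts.

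For the $L^2$ bound, the key observation is that $X_{n+1} \in \{0,1\}$, so $X_{n+1}^2 = X_{n+1}$. Squaring $\Sigma_{n+1} = \Sigma_n + X_{n+1}\mu_{n+1}$ and taking conditional expectations via \eqref{X_n+1} yields
\begin{equation*}
E[\Sigma_{n+1}^2 \mid \mathcal{F}_n] = \left(1 + \tfrac{2p(\beta+1)}{n}\right)\Sigma_n^2 + \tfrac{p(\beta+1)\mu_{n+1}}{n}\Sigma_n.
\end{equation*}
Since $c_{n+1}(\xi)/c_n(\xi) = 1 + \xi/n$, this makes $Y_n := \Sigma_n^2/c_n(2p(\beta+1))$ a nonnegative submartingale whose expected increments are, by \eqref{eq:EofSigma_nAsymp} and \eqref{def:c_n(xi)}, of order $n^{\beta - p(\beta+1) - 1}$. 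The hypothesis $\beta < p/(1-p)$ is exactly $p(\beta+1) - \beta > 0$, so these increments are summable, giving $\sup_n E[M_n^2] < \infty$ and the required $L^2$-convergence.

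For the asymptotic of $\Xi_n$, apply summation by parts to $\Xi_n = \sum_{k=1}^n (\Sigma_k - \Sigma_{k-1})/\mu_k$. The identity $\mu_{k+1} = (1 + \beta/k)\mu_k$ gives $\mu_k^{-1} - \mu_{k+1}^{-1} = \beta/(k\mu_{k+1})$, so
\begin{equation*}
\Xi_n = \frac{\Sigma_n}{\mu_n} + \beta\sum_{k=1}^{n-1}\frac{\Sigma_k}{k\mu_{k+1}}.
\end{equation*}
On $\{M_\infty > 0\}$, $\Sigma_k = M_k c_k(p(\beta+1))$ with $M_k \to M_\infty$ a.s., so by \eqref{def:c_n(xi)} the boundary term satisfies $\Sigma_n/\mu_n \sim M_\infty[\Gamma(\beta+1)/\Gamma(p(\beta+1)+1)] n^{p(\beta+1)-\beta}$ a.s., and each summand is asymptotic to $M_\infty[\Gamma(\beta+1)/\Gamma(p(\beta+1)+1)] k^{p(\beta+1)-\beta-1}$. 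A Toeplitz-type argument, legitimate because the weights $k^{p(\beta+1)-\beta-1}$ have divergent sum (precisely in the regime $p(\beta+1)-\beta>0$), together with $\sum_{k=1}^{n-1} k^{\gamma-1} \sim n^\gamma/\gamma$ for $\gamma:=p(\beta+1)-\beta$, then gives
\begin{equation*}
\Xi_n \sim M_\infty \cdot \frac{\Gamma(\beta+1)}{\Gamma(p(\beta+1)+1)}\left(1 + \frac{\beta}{\gamma}\right) n^\gamma,
\end{equation*}
and the prefactor collapses to $C(p,\beta)$ using $\Gamma(p(\beta+1)+1) = p(\beta+1)\Gamma(p(\beta+1))$ and $1 + \beta/\gamma = p(\beta+1)/\gamma$.

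The main technical point is the $L^2$ estimate: one must verify that the forcing term in the recursion for $E[Y_n]$ is summable exactly when $\beta < p/(1-p)$, which matches the hypothesis. The integration-by-parts step and the Toeplitz-type passage from $M_k$ to $M_\infty$ under the weighted sum are then essentially bookkeeping.
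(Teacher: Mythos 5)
Your proposal is correct. The first half ($L^2$-boundedness of $M_n$ via the recursion for $E[\Sigma_{n+1}^2\mid\mathcal{F}_n]$ and the normalization by $c_n(2p(\beta+1))$, then $E[M_\infty]=E[M_1]=1$) is essentially identical to the paper's Proposition on $L^2$-boundedness. The second half, however, takes a genuinely different route: the paper passes from $\Sigma_n$ to $\Xi_n$ probabilistically, via the conditional Borel--Cantelli lemma applied to $A_n=\sum_{k\le n}E[X_k\mid\mathcal{F}_{k-1}]$, concluding $\Xi_n\sim A_n$ on $\{A_\infty=\infty\}$ and computing $A_n$ on $\{M_\infty>0\}$; you instead exploit the exact pathwise identity $X_k=(\Sigma_k-\Sigma_{k-1})/\mu_k$ and Abel summation, using $\mu_k^{-1}-\mu_{k+1}^{-1}=\beta/(k\mu_{k+1})$, so that once $M_n\to M_\infty$ a.s.\ is known, the asymptotics of $\Xi_n$ follow deterministically from those of $\Sigma_k$ by Stolz--Ces\`aro (valid precisely because $\gamma=p(\beta+1)-\beta>0$ makes $\sum k^{\gamma-1}$ diverge), and your constant check $\bigl(1+\beta/\gamma\bigr)\Gamma(\beta+1)/\Gamma(p(\beta+1)+1)=C(p,\beta)$ is right; note also that for $\beta<0$ the negative sum cannot cancel the boundary term since $1+\beta/\gamma=p(\beta+1)/\gamma>0$. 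Your argument is more elementary and self-contained for this particular theorem, avoiding L\'evy's extension of Borel--Cantelli entirely; the paper's choice buys reusability, since the same conditional Borel--Cantelli setup immediately gives $\Xi_\infty<\infty$ a.s.\ when $A_\infty<\infty$ (an alternative proof of Corollary \ref{cor:EofXi_n}(iii)), the $\log n$ asymptotics in the critical case of Remark \ref{rem:ConsequencesCondBC}, and the lower bound of Lemma \ref{lem:NegativeBetaUnbdd} for $-1<\beta<0$.
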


\begin{theorem}
\label{r-thm:PositiveBetabdd} If $\beta >0$ then $P(\Xi_{\infty}=k)>0$ for any $k \in \mathbb{N}$, and in particular, $P(\mbox{$\Xi_n=1$ for all $n\in \mathbb{N}$}) >0$.
\end{theorem}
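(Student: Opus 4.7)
The plan is to realize $\{\Xi_{\infty} = k\}$ as the intersection of two positive-probability events: the finite construction event $\{X_1 = X_2 = \cdots = X_k = 1\}$, followed by the freezing event $\{X_n = 0 \text{ for all } n > k\}$. Both steps rest on the fact that $X_n \in \{0,1\}$ for every $n$, so that by \eqref{X_n+1},
\[ P(X_{n+1} = 1 \mid \mathcal{F}_n) = \dfrac{p(\beta+1)}{n \mu_{n+1}} \, \Sigma_n. \]

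For the freezing step, I would observe that on $\{X_{j+1} = \cdots = X_n = 0\}$ the auxiliary process is constant, $\Sigma_m = \Sigma_j$ for $j \leq m \leq n$, hence
\[ P(X_{n+1} = 0 \mid \mathcal{F}_n) = 1 - \dfrac{p(\beta+1)\Sigma_j}{n \mu_{n+1}}. \]
From \eqref{def:c_n(xi)}, $\mu_{n+1} \sim n^{\beta}/\Gamma(\beta+1)$, so the series $\sum_n 1/(n \mu_{n+1})$ converges \emph{precisely} when $\beta > 0$. For any finite value $s$ of $\Sigma_j$ the factors lie in $(0,1)$ for all sufficiently large $n$, and the infinite product $\prod_{n \geq N}\bigl(1 - p(\beta+1) s/(n \mu_{n+1})\bigr)$ then converges to a strictly positive limit. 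This yields $P\bigl(X_n = 0 \text{ for all } n > j \bigm| \mathcal{F}_j\bigr) > 0$ almost surely on $\{\Sigma_j < \infty\}$.

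For the construction step, I would argue by induction that $P(X_1 = \cdots = X_k = 1) > 0$ for every $k \in \mathbb{N}$: the base case $k = 1$ is trivial since $X_1 \equiv 1$, and if the event $\{X_1 = \cdots = X_n = 1\}$ has positive probability then on it, $\Sigma_n = \sum_{i=1}^n \mu_i > 0$, so the one-step probability of adding another $1$ equals $\dfrac{p(\beta+1)}{n\mu_{n+1}} \sum_{i=1}^n \mu_i > 0$. Intersecting this finite-step event with the freezing event at $j = k$ (with $\Sigma_k = \sum_{i=1}^k \mu_i$) then produces a positive-probability subset of $\{\Xi_{\infty} = k\}$. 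The particular case $k = 1$ immediately yields $P(\Xi_n = 1 \text{ for all } n) > 0$.

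The main obstacle is the quantitative step of verifying that the tail product for the freezing event is non-degenerate. This is precisely equivalent to the summability $\sum 1/(n\mu_{n+1}) < \infty$, which in turn is equivalent to $\beta > 0$; so the hypothesis is used in exactly one place, and the strategy would fail (the product would vanish) as soon as $\beta \leq 0$.
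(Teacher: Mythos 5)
Your proposal is correct and follows essentially the same route as the paper: force $X_1=\cdots=X_k=1$ with positive probability, then freeze the walk via an infinite product of conditional probabilities $1-p(\beta+1)\Sigma_k/(n\mu_{n+1})$, whose positivity comes exactly from the summability of $1/(n\mu_{n+1})$ when $\beta>0$ (the paper writes the same quantities as $p\sum_{j\le k}P(\beta_n=j)$ rather than via $\Sigma_n$). The only cosmetic remark is that each factor is automatically at least $1-p>0$ for every $n$, since $p(\beta+1)\Sigma_k/(n\mu_{n+1})=p\,P(\beta_{n+1}\le k)\le p$, so no restriction to ``sufficiently large $n$'' is needed for positivity of the finitely many initial factors.
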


\noindent From Corollary \ref{cor:EofXi_n} (ii) we have $E[\Xi_{\infty}]=+\infty$ for $\beta = p/(1-p)$, however

\begin{theorem}
\label{r-thm:Extinction}  If $\beta = p/(1-p)$ then $P(\Xi_{\infty} <+\infty)=1$.
\end{theorem}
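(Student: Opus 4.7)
The plan is to combine a direct computation of the conditional probability of ``no further successes after time $n$'' with L\'evy's $0$-$1$ law. At criticality $p(\beta+1)=\beta$, so $c_n(p(\beta+1))=\mu_n$, and the martingale in \eqref{def:MnMart} simplifies to $M_n=\Sigma_n/\mu_n$; by non-negative martingale convergence, $M_n \to M_\infty$ a.s.\ with $M_\infty<\infty$ a.s., so in particular $\sup_n M_n<\infty$ a.s. The goal is to estimate
\[
Q_n := P\bigl(X_m=0 \text{ for all } m > n \,\big|\, \mathcal{F}_n\bigr)
= \prod_{m=n}^{\infty}\left(1 - \frac{\beta\,\Sigma_n}{m\,\mu_{m+1}}\right),
\]
the product form reflecting that $\Sigma_m$ stays frozen at $\Sigma_n$ on the event being estimated.

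The core identity driving the proof is the telescoping
\[
\frac{\beta}{m\,\mu_{m+1}} = \frac{\mu_{m+1}-\mu_m}{\mu_m\,\mu_{m+1}} = \frac{1}{\mu_m} - \frac{1}{\mu_{m+1}},
\]
which follows from $\mu_{m+1}-\mu_m=\beta\mu_m/m$. Since $\beta>0$ makes $\mu_m\to\infty$, summing telescopes to $\sum_{m\ge n} \beta\Sigma_n/(m\mu_{m+1}) = \Sigma_n/\mu_n = M_n$. Expanding $\log(1-x)=-x+O(x^2)$ term by term and controlling the quadratic error via $\sum_{m\ge n}(m\mu_{m+1})^{-2}=O(n^{-2\beta-1})$ together with $\Sigma_n \le (\sup_k M_k)\mu_n$, one obtains $\log Q_n = -M_n + O(1/n)$ almost surely, and hence $Q_n \to e^{-M_\infty}$ a.s.

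To finish, observe that $\{X_m=0 \text{ for all } m>n\}\subseteq\{\Xi_\infty<\infty\}$, so $P(\Xi_\infty<\infty\mid\mathcal{F}_n)\ge Q_n$. By L\'evy's $0$-$1$ law applied to the $\mathcal{F}_\infty$-measurable event $\{\Xi_\infty<\infty\}$, the left-hand side converges a.s.\ to $\mathbf{1}_{\{\Xi_\infty<\infty\}}$, yielding $\mathbf{1}_{\{\Xi_\infty<\infty\}}\ge e^{-M_\infty}>0$ a.s. Since an indicator cannot equal a strictly positive number while being $0$, this forces $\Xi_\infty<\infty$ a.s. The main technical point is making the approximation $Q_n \approx e^{-M_n}$ rigorous: one needs uniform control of $\Sigma_n/\mu_n$ so that the quadratic error in the logarithmic expansion vanishes, and this is exactly what the almost-sure finiteness of $\sup_n M_n$ supplied by martingale convergence provides. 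All remaining estimates are routine consequences of $\mu_n\sim n^\beta/\Gamma(\beta+1)$.
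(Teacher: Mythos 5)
Your argument is correct, and it is genuinely different from the paper's proof. You compute directly that, by iterated conditioning (and monotone continuity along the decreasing finite-horizon events), $Q_n=P(X_m=0\ \forall m>n\mid\mathcal{F}_n)=\prod_{m\ge n}\bigl(1-\tfrac{\beta\Sigma_n}{m\mu_{m+1}}\bigr)$, and the telescoping $\tfrac{\beta}{m\mu_{m+1}}=\tfrac{1}{\mu_m}-\tfrac{1}{\mu_{m+1}}$ shows the total future hazard is exactly $M_n=\Sigma_n/\mu_n$ (criticality $p(\beta+1)=\beta$ is what makes the hazard coefficient match the martingale normalization); since each factor is a conditional jump probability bounded by $p<1$, the logarithmic expansion is uniform and the quadratic error is $O(1/n)$ a.s.\ thanks to $\sup_n M_n<\infty$, so $Q_n\to e^{-M_\infty}>0$ a.s., and L\'evy's $0$--$1$ law applied to the $\mathcal{F}_\infty$-measurable event $\{\Xi_\infty<\infty\}$ forces its indicator to be $\ge e^{-M_\infty}>0$, hence $\equiv 1$. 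The paper instead couples $\{\Xi_n\}$ with a multi-type branching process whose mean offspring number is $p(\beta+1)/\beta=1$ at criticality, proves extinction of that process via the uniform lower bound $\inf_k p_0^{(k)}>0$ (Lemmas \ref{lem:BranchingHideki230804_A} and \ref{lem:BranchingHideki230804_B}), and then transfers extinction back through a coupling that requires the summability condition \eqref{;2.7cond}, verified only via the third-moment estimate \eqref{eq:asympE[(Xi_n)^3]crit} of Section \ref{sec:MomentsCritical}; your route bypasses the branching construction, the coupling, and all of the moment computations, and in fact also covers $\beta>p/(1-p)$ (where the total hazard $\tfrac{p(\beta+1)}{\beta}\Sigma_n/\mu_n$ tends to $0$), while at $0<\beta<p/(1-p)$ it degenerates harmlessly since $\Sigma_n/\mu_n\to\infty$ on $\{M_\infty>0\}$. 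What the paper's heavier machinery buys is the branching-process comparison itself, which the authors advertise as being of independent interest for other ERW models, and the critical-case moment asymptotics of Proposition \ref{prop:MixedMomentsCritical}, which have interest beyond this theorem; as a proof of Theorem \ref{r-thm:Extinction} alone, your argument is shorter and self-contained, and the only points you should spell out in a final write-up are the uniform bound $\tfrac{\beta\Sigma_n}{m\mu_{m+1}}\le p$ justifying the expansion and the measurability of $\{\Xi_\infty<\infty\}$ with respect to $\mathcal{F}_\infty$.
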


\noindent For the case when $\beta<0$, later in Lemma \ref{lem:NegativeBetaUnbdd} we obtain a lower bound on the rate of growth of $\Xi_n$, and show $P(\Xi_{\infty}=+\infty)=1$.

\section{Proof of Theorem \ref{thm:EofXi_n}}
\label{r-sec:proofs}

First we show that (\ref{r-beta1}) is indeed a probability mass function.
Assume that $a>-1$, $b \geq 0$ and $b\neq a+1$.
Note that
\begin{align*}
 \dfrac{\Gamma(k+a)}{\Gamma(k+b)} &= \dfrac{\Gamma(k+a)}{\Gamma(k+b)} \cdot \dfrac{(k+a)-(k+b-1)}{a-b+1} \\
 &=  \dfrac{1}{a-b+1} \left( \dfrac{\Gamma(k+1+a)}{\Gamma(k+b)} - \dfrac{\Gamma(k+a)}{\Gamma(k-1+b)}\right)
\end{align*}
for $k \in \mathbb{N}$. For $N,N' \in \mathbb{N}$ with $N<N'$, we have
\begin{align}
\sum_{k=N}^{N'} \dfrac{\Gamma(k+a)}{\Gamma(k+b)} &= \dfrac{1}{a-b+1}\left( \dfrac{\Gamma(N'+1+a)}{\Gamma(N'+b)} - \dfrac{\Gamma(N+a)}{\Gamma(N-1+b)}\right), \label{lem:Bercu18LemmaSeed}
\end{align}
where we regard $1/\Gamma(0)=0$.
By \eqref{lem:Bercu18LemmaSeed}, 
\begin{align*}
 \sum_{k=1}^n \mu_k &= \dfrac{1}{\Gamma(\beta+1)}\sum_{k=1}^n \dfrac{\Gamma(k+\beta)}{\Gamma(k)} \\
 &= \dfrac{1}{\Gamma(\beta+1)} \cdot \dfrac{1}{\beta+1} \left( \dfrac{\Gamma(n+1+\beta)}{\Gamma(n)} - \dfrac{\Gamma(1+\beta)}{\Gamma(0)} \right) 
 =\mu_{n+1} \cdot \dfrac{n}{\beta+1},
\end{align*}
which is equivalent to
\[
\sum_{k=1}^n P(\beta_{n+1}=k) = \sum_{k=1}^n \dfrac{\beta + 1}{n} \cdot \dfrac{\mu_k}{\mu_{n+1}} =1.
\]

To prove Theorem \ref{thm:EofXi_n} we need the following
\begin{lemma} \label{lem:Bercu18LemmaVariant} Let $x,y >-1$. If $x \neq y$ then
\begin{align*}
\sum_{k=1}^{n-1} \dfrac{c_k(x)}{k \cdot c_{k+1} (y)} &= \dfrac{1}{x-y} \cdot \left\{ \dfrac{c_n(x)}{c_n(y)} - 1 \right\}.
\end{align*}
If $x=y$ then
\begin{align*}
\sum_{k=1}^{n-1} \dfrac{c_k(x)}{k \cdot c_{k+1} (y)} &= \sum_{k=1}^{n-1} \dfrac{1}{k+x}.
\end{align*}
\end{lemma}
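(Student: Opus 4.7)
The plan is to reduce both statements to direct manipulations of Gamma functions, using the telescoping identity \eqref{lem:Bercu18LemmaSeed} already established just above the lemma.

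First I would rewrite the summand in closed form. Using $c_k(\xi) = \Gamma(k+\xi)/(\Gamma(k)\Gamma(\xi+1))$ together with $\Gamma(k+1)=k\,\Gamma(k)$, the factor $k$ in the denominator cancels, leaving
\begin{align*}
\dfrac{c_k(x)}{k \cdot c_{k+1}(y)} = \dfrac{\Gamma(y+1)}{\Gamma(x+1)} \cdot \dfrac{\Gamma(k+x)}{\Gamma(k+1+y)}.
\end{align*}
This single simplification is the heart of the matter; everything else is bookkeeping.

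For the case $x \neq y$, I would apply \eqref{lem:Bercu18LemmaSeed} with $a=x$ and $b=y+1$ (so that $a-b+1 = x-y \neq 0$) and with $N=1$, $N'=n-1$, yielding
\begin{align*}
\sum_{k=1}^{n-1} \dfrac{\Gamma(k+x)}{\Gamma(k+1+y)} = \dfrac{1}{x-y}\left( \dfrac{\Gamma(n+x)}{\Gamma(n+y)} - \dfrac{\Gamma(1+x)}{\Gamma(1+y)}\right).
\end{align*}
Multiplying by the constant $\Gamma(y+1)/\Gamma(x+1)$ turns the first bracketed term into $c_n(x)/c_n(y)$ (since the $\Gamma(n)$'s cancel in the ratio) and the second into $1$, which is exactly the claimed formula.

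For the case $x=y$, the same simplification collapses the summand further: $\Gamma(k+x)/\Gamma(k+1+x) = 1/(k+x)$, so the sum is literally $\sum_{k=1}^{n-1} 1/(k+x)$. I do not expect any real obstacle; the only point to be careful about is that the constant prefactor $\Gamma(y+1)/\Gamma(x+1)$ combines cleanly with $\Gamma(n+x)/\Gamma(n+y)$ to reconstruct the ratio $c_n(x)/c_n(y)$, which is the reason the lemma is stated in this particular form.
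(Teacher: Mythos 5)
Your proposal is correct and follows essentially the same route as the paper: rewrite the summand via the Gamma-function form of $c_n(\xi)$ so that the factor $k$ cancels, leaving $\frac{\Gamma(y+1)}{\Gamma(x+1)}\cdot\frac{\Gamma(k+x)}{\Gamma(k+1+y)}$, then either read off $1/(k+x)$ when $x=y$ or apply the telescoping identity \eqref{lem:Bercu18LemmaSeed} (with $a=x$, $b=y+1$) when $x\neq y$ and recombine the prefactor into $c_n(x)/c_n(y)$. Your explicit check that the hypotheses of \eqref{lem:Bercu18LemmaSeed} hold is a welcome detail, but the argument is the same.
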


\begin{proof} By \eqref{def:c_n(xi)},
\begin{align*}
\sum_{k=1}^{n-1} \dfrac{c_k(x)}{k \cdot c_{k+1} (y)} &= \sum_{k=1}^{n-1} \dfrac{1}{k} \cdot \dfrac{\Gamma(k+x)}{\Gamma(k)\Gamma(x+1)} \cdot \dfrac{\Gamma(k)\Gamma(y+1)}{\Gamma(k+1+y)} \\
&= \dfrac{\Gamma(y+1)}{\Gamma(x+1)} \sum_{k=1}^{n-1} \dfrac{\Gamma(k+x)}{\Gamma(k+y+1)}.
\end{align*}
We obtain the conclusion if $x=y$. Otherwise by \eqref{lem:Bercu18LemmaSeed}
\begin{align*}
\sum_{k=1}^{n-1} \dfrac{c_k(x)}{k \cdot c_{k+1} (y)} &= \dfrac{\Gamma(y+1)}{\Gamma(x+1)} \cdot \dfrac{1}{x-y} \cdot \left\{ \dfrac{\Gamma(n+x)}{\Gamma(n+y)} - \dfrac{\Gamma(1+x)}{\Gamma(1+y)}\right\} \\
&=  \dfrac{1}{x-y} \cdot\left\{ \dfrac{c_n(x)}{c_n(y)} - 1 \right\}.
\end{align*} 
This completes the proof.
\end{proof}

\begin{proof}[Proof of Theorem \ref{thm:EofXi_n}]
Noting that by \eqref{X_n+1} and \eqref{eq:EofSigma_n},
\begin{align*}
E[\Xi_{n+1}] - E[\Xi_n] = E[X_{n+1}] = \dfrac{p(\beta+1) \cdot E[\Sigma_n]}{n \cdot \mu_{n+1}} = \dfrac{p(\beta+1) \cdot c_n(p(\beta+1))}{n \cdot c_{n+1}(\beta)},
\end{align*}
we have
\begin{align*}
E[\Xi_n] &= E[\Xi_1] + \sum_{k=1}^{n-1} \dfrac{p(\beta+1) \cdot c_k(p(\beta+1))}{k \cdot c_{k+1}(\beta)}.
\end{align*}
This together with Lemma \ref{lem:Bercu18LemmaVariant} implies Theorem \ref{thm:EofXi_n}. 
\end{proof}

\section{Proof of Theorem \ref{r-thmsmallbeta}}
\label{r-sec:proof-2}

Regarding the non-negative martingale $M_n=\Sigma_n/c_n(p(\beta+1))$, obtained in Section \ref{sec:results}, we have

\begin{proposition} \label{thm:MartL2bdd} $\{M_n : n \in \mathbb{N} \}$ is an $L^2$-bounded martingale if and only if $\beta < p/(1-p)$.
In particular, if $\beta < p/(1-p)$ then $P(M_{\infty}>0)>0$.
\end{proposition}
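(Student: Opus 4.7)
The plan is to compute $E[\Sigma_n^2]$ via a recursion and extract the asymptotics of $E[M_n^2]$ using the standard estimate $c_n(\xi) \sim n^{\xi}/\Gamma(\xi+1)$. Write $\gamma := p(\beta+1)$ throughout. The key observation is that every $X_k \in \{0,1\}$, so $X_{n+1}^2 = X_{n+1}$, which makes the second-moment recursion manageable.

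First, starting from $\Sigma_{n+1} = \Sigma_n + \mu_{n+1} X_{n+1}$ and using \eqref{X_n+1} together with $X_{n+1}^2=X_{n+1}$, I would compute
\begin{align*}
E[\Sigma_{n+1}^2 \mid \mathcal{F}_n] = \Sigma_n^2 + \bigl(2 \mu_{n+1} \Sigma_n + \mu_{n+1}^2\bigr) \cdot \dfrac{\gamma}{n \mu_{n+1}} \Sigma_n = \left(1 + \dfrac{2\gamma}{n}\right)\Sigma_n^2 + \dfrac{\gamma \mu_{n+1}}{n}\Sigma_n.
\end{align*}
Taking expectations and using \eqref{eq:EofSigma_n} and $\mu_{n+1}=c_{n+1}(\beta)$, I obtain the unconditional recursion
\begin{align*}
E[\Sigma_{n+1}^2] = \left(1 + \dfrac{2\gamma}{n}\right) E[\Sigma_n^2] + \dfrac{\gamma \cdot c_{n+1}(\beta) \cdot c_n(\gamma)}{n}.
\end{align*}

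Next, using the identity $c_{n+1}(2\gamma) = \frac{n+2\gamma}{n} c_n(2\gamma) = (1+2\gamma/n)\,c_n(2\gamma)$, I would divide by $c_{n+1}(2\gamma)$ to obtain the telescoping form
\begin{align*}
\dfrac{E[\Sigma_{n+1}^2]}{c_{n+1}(2\gamma)} = \dfrac{E[\Sigma_n^2]}{c_n(2\gamma)} + \dfrac{\gamma \cdot c_{n+1}(\beta) \cdot c_n(\gamma)}{n \cdot c_{n+1}(2\gamma)}.
\end{align*}
Since $E[\Sigma_1^2]=1=c_1(2\gamma)$, this gives
\begin{align*}
\dfrac{E[\Sigma_n^2]}{c_n(2\gamma)} = 1 + \gamma \sum_{k=1}^{n-1}\dfrac{c_{k+1}(\beta)\,c_k(\gamma)}{k\,c_{k+1}(2\gamma)}.
\end{align*}
Applying \eqref{def:c_n(xi)} to each factor, the summand is asymptotically of order $k^{\beta + \gamma - 2\gamma - 1} = k^{\beta-\gamma-1}$, so the series converges if and only if $\beta < \gamma = p(\beta+1)$, which is equivalent to $\beta < p/(1-p)$ (since $0<p<1$).

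Finally, since $E[M_n^2] = E[\Sigma_n^2]/c_n(\gamma)^2$ and $c_n(2\gamma)/c_n(\gamma)^2 \to \Gamma(\gamma+1)^2/\Gamma(2\gamma+1)>0$, the sequence $E[M_n^2]$ is bounded if and only if $E[\Sigma_n^2]/c_n(2\gamma)$ is bounded, i.e., if and only if $\beta < p/(1-p)$. When this holds, $L^2$-boundedness of the non-negative martingale $\{M_n\}$ yields $M_n \to M_{\infty}$ in $L^2$, hence $E[M_{\infty}] = \lim_{n\to\infty} E[M_n] = M_1 = 1 > 0$, so $P(M_{\infty}>0)>0$. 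The only real bookkeeping obstacle is getting the normalization that makes the recursion telescope; after that, the analysis is purely asymptotic.
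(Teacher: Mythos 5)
Your proposal is correct and follows essentially the same route as the paper: the same conditional second-moment recursion for $\Sigma_n^2$ (using $X_{n+1}^2=X_{n+1}$), the same normalization by $c_n(2p(\beta+1))$ (the paper calls this $L_n$), the same asymptotic comparison showing the increments are of order $n^{\beta-p(\beta+1)-1}$, and the same observation that $c_n(2p(\beta+1))/c_n(p(\beta+1))^2$ converges to a positive constant. Your explicit telescoping sum and the final $L^2$-convergence argument for $P(M_\infty>0)>0$ are just slightly more detailed versions of what the paper does.
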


\begin{proof} Using \eqref{X_n+1},
\begin{align*}
E[\Sigma_{n+1}^2 \mid \mathcal{F}_n] 
=\left(1+\dfrac{2p(\beta+1)}{n} \right) \cdot \Sigma_n^2 + \mu_{n+1} \cdot \dfrac{p(\beta+1)}{n} \cdot \Sigma_n.
\end{align*}
Setting
\[ L_n := \dfrac{\Sigma_n^2}{c_n(2p(\beta+1))}, \]
we have, for some $C(\beta)>0$,
\begin{align}
E[L_{n+1}] - E[L_n] &= c_{n+1}(\beta) \cdot \dfrac{p(\beta+1) \cdot E[\Sigma_n]}{n \cdot c_{n+1}(2p(\beta+1))} \notag \\
&\sim C(\beta) \cdot n^{\beta-p(\beta+1)-1} \quad\mbox{as $n \to \infty$.} \label{eq:MartL2bddKey}
\end{align}
Note that
\begin{align*}
\dfrac{E[M_n^2]}{E[L_n]} = \dfrac{c_n(2p(\beta+1))}{c_n(p(\beta+1))^2} \sim \dfrac{\Gamma(p(\beta+1)+1)^2}{\Gamma(2p(\beta+1)+1)} \quad \mbox{as $n \to \infty$.} 
\end{align*}
Using \eqref{eq:MartL2bddKey}, we see that $\displaystyle \sup_{n \geq 1} E[M_n^2]<+\infty$ if and only if $\beta-p(\beta+1)<0$.
\end{proof}

Theorem \ref{r-thmsmallbeta} follows from Proposition \ref{thm:MartL2bdd} and the following lemma.

\begin{lemma} \label{lem:ConsequencesCondBC} If $-1<\beta<p/(1-p)$, then 
$$
\Xi_n \sim  \dfrac{\Gamma(\beta+1) M_{\infty}}{\Gamma(p(\beta+1))} \cdot \dfrac{n^{p (\beta+1)-\beta}}{p (\beta+1)-\beta} \quad \mbox{on $\{M_{\infty}>0\}$.}
$$
\end{lemma}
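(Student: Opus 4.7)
The plan is to relate $\Xi_n = \sum_{k=1}^n X_k$ to the auxiliary sum $\Sigma_n$ by Abel summation, and then extract asymptotics from the a.s.\ convergence $M_n = \Sigma_n/c_n(p(\beta+1)) \to M_\infty$ (which is in fact $L^2$-bounded by Proposition \ref{thm:MartL2bdd}). Writing $X_k = (\Sigma_k - \Sigma_{k-1})/\mu_k$ with $\Sigma_0 = 0$ and applying summation by parts gives
\[
\Xi_n = \frac{\Sigma_n}{\mu_n} + \sum_{k=1}^{n-1} \Sigma_k \left(\frac{1}{\mu_k} - \frac{1}{\mu_{k+1}}\right),
\]
which reduces the problem to evaluating a weighted sum of the $\Sigma_k$.

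I would next simplify the differences using the recurrence $\mu_{k+1} = ((k+\beta)/k)\mu_k$, which yields the telescoping identity $1/\mu_k - 1/\mu_{k+1} = \beta/(k\mu_{k+1})$. Substituting $\Sigma_k = M_k c_k(p(\beta+1))$ and $\mu_k = c_k(\beta)$ rewrites the Abel identity as
\[
\Xi_n = M_n \cdot \frac{c_n(p(\beta+1))}{c_n(\beta)} + \beta \sum_{k=1}^{n-1} M_k \cdot \frac{c_k(p(\beta+1))}{k \cdot c_{k+1}(\beta)}.
\]
Denoting the weights $w_k := c_k(p(\beta+1))/(k \cdot c_{k+1}(\beta))$ and $W_n := \sum_{k=1}^{n-1} w_k$, Lemma \ref{lem:Bercu18LemmaVariant} provides the closed form $W_n = (c_n(p(\beta+1))/c_n(\beta) - 1)/(p(\beta+1)-\beta)$, whose dominant behaviour is of order $n^{p(\beta+1)-\beta}$ under the standing hypothesis $\beta < p/(1-p)$.

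On $\{M_\infty>0\}$, the sequence $M_k$ converges to $M_\infty$ a.s.\ and is therefore bounded. Since $w_k > 0$ and $W_n \to \infty$, the standard Toeplitz/Ces\`{a}ro lemma delivers $\sum_{k=1}^{n-1} M_k w_k \sim M_\infty W_n$ pointwise on this event. Combining this with the boundary term $M_n c_n(p(\beta+1))/c_n(\beta) \sim M_\infty \cdot \Gamma(\beta+1)/\Gamma(p(\beta+1)+1) \cdot n^{p(\beta+1)-\beta}$ produces a common prefactor $1 + \beta/(p(\beta+1)-\beta) = p(\beta+1)/(p(\beta+1)-\beta)$, and the identity $\Gamma(p(\beta+1)+1) = p(\beta+1)\Gamma(p(\beta+1))$ collapses the constants to the stated $\Gamma(\beta+1)/(\Gamma(p(\beta+1))(p(\beta+1)-\beta))$.

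The only genuine obstacle is the Ces\`{a}ro step, but it is routine here: a.s.\ convergence $M_k \to M_\infty$ on the event provides pointwise convergence of a bounded sequence, the weights $w_k$ are positive, and $W_n \to \infty$, so no further integrability or uniformity is required. Everything else is algebraic bookkeeping around Lemma \ref{lem:Bercu18LemmaVariant} and the Stirling-type asymptotics of the $c_n(\xi)$.
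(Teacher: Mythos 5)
Your proof is correct, but it takes a genuinely different route from the paper. The paper conditions step by step: it computes $E[X_{k+1}\mid\mathcal{F}_k]\sim \frac{\Gamma(\beta+1)M_\infty}{\Gamma(p(\beta+1))}k^{p(\beta+1)-\beta-1}$ on $\{M_\infty>0\}$, sets $A_n=\sum_{k=1}^n E[X_k\mid\mathcal{F}_{k-1}]$, and invokes the conditional (L\'evy extension of the) Borel--Cantelli lemma to conclude $\Xi_n\sim A_n$ on $\{A_\infty=+\infty\}$, then sums the asymptotics of the conditional expectations. You instead derive the exact pathwise identity
\[
\Xi_n=\frac{\Sigma_n}{\mu_n}+\beta\sum_{k=1}^{n-1}\frac{\Sigma_k}{k\,\mu_{k+1}}
= M_n\,\frac{c_n(p(\beta+1))}{c_n(\beta)}+\beta\sum_{k=1}^{n-1}M_k\,\frac{c_k(p(\beta+1))}{k\,c_{k+1}(\beta)},
\]
via Abel summation and $\mu_{k+1}=\frac{k+\beta}{k}\mu_k$, and then use only the a.s.\ convergence $M_k\to M_\infty$ together with the Toeplitz lemma (the weights are positive and, by Lemma \ref{lem:Bercu18LemmaVariant}, their partial sums diverge like $n^{p(\beta+1)-\beta}$ precisely because $\beta<p/(1-p)$); the constants recombine correctly since $1+\beta/(p(\beta+1)-\beta)=p(\beta+1)/(p(\beta+1)-\beta)>0$, so there is no cancellation issue even for $\beta<0$, and the $\beta=0$ case degenerates harmlessly. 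What your approach buys is a purely deterministic reduction once martingale convergence is granted --- no conditional Borel--Cantelli needed --- plus an exact identity that is valid in every regime and could be reused elsewhere (e.g.\ at $\beta=p/(1-p)$). What the paper's approach buys is the extra dichotomy from the conditional Borel--Cantelli lemma, namely $\Xi_\infty<+\infty$ a.s.\ on $\{A_\infty<+\infty\}$, which it exploits in Remark \ref{rem:ConsequencesCondBC} to re-derive Corollary \ref{cor:EofXi_n}~(iii) and to discuss the critical case; your argument does not yield that by itself.
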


\begin{proof}
Recall (\ref{X_n+1}), that
\[ E[X_{n+1} \mid \mathcal{F}_n] = \dfrac{p(\beta+1)}{n \mu_{n+1}} \cdot \Sigma_n \quad \mbox{and}\quad \mu_{n+1} 
  \sim \dfrac{n^{\beta}}{\Gamma(\beta+1)}.  \]
On $\{M_{\infty}>0\}$, 
\begin{align*}
E[X_{n+1} \mid \mathcal{F}_n] &\sim \dfrac{p(\beta+1)}{n} \cdot \dfrac{\Gamma(\beta+1)}{n^{\beta}} \cdot \dfrac{M_{\infty} \cdot n^{p (\beta+1)}}{\Gamma(p(\beta+1)+1)} \\
&= \dfrac{\Gamma(\beta+1) M_{\infty}}{\Gamma(p(\beta+1))} \cdot n^{p (\beta+1)-\beta-1}\quad \mbox{as $n \to \infty$.}
\end{align*}
Define
\begin{align}
\label{eq:AnDefn}
A_n:= \sum_{k=1}^n E[X_k \mid \mathcal{F}_{k-1}].
\end{align}
From the conditional Borel--Cantelli lemma (see e.g. Williams \cite{Williams91Book}, p.124), 
\begin{itemize}
\item $\Xi_{\infty} < + \infty$ a.s. on $\{A_{\infty}<+\infty\}$.
\item $\Xi_n \sim A_n$ as $n \to \infty$, a.s. on $\{A_{\infty}=+\infty\}$.
\end{itemize}
If $-1 < \beta < p/(1-p)$ then 
\begin{align*}
 A_n \sim  \dfrac{\Gamma(\beta+1) M_{\infty}}{\Gamma(p(\beta+1))} \cdot \dfrac{n^{p (\beta+1)-\beta}}{p (\beta+1)-\beta} \quad \mbox{on $\{M_{\infty}>0\}$,} 
\end{align*}
which completes the proof.
\end{proof}

\begin{remark}
\label{rem:ConsequencesCondBC} 
If $\beta>p/(1-p)$, i.e. $p (\beta+1)-\beta-1<-1$, then $A_{\infty} < +\infty$ and $\Xi_{\infty} <+\infty$ a.s. (This is another proof of Corollary \ref{cor:EofXi_n} (iii).) When $\beta=p/(1-p)$ we have 
\[
\Xi_n \sim  \beta M_{\infty} \log n  \quad \mbox{on $\{M_{\infty}>0\}$,}
\]
but as we will see later $P(M_{\infty}>0)=0$ in this case.
\end{remark}

 
As mentioned at the end of Section \ref{sec:results}, we close this section with

\begin{lemma} \label{lem:NegativeBetaUnbdd} If $-1<\beta <0$, then there exists a positive constant $C=C(\beta,p)$ such that
\[
P\left(
\mbox{$A_n \geq C n^{-\beta}$ for all $n$, and $\Xi_n \sim A_n$ as $n \to \infty$}\\
\right) = 1.
\]
\end{lemma}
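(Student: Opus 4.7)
The plan is to obtain the lower bound on $A_n$ deterministically by exploiting the unidirectional structure of the walk: since $X_k \in \{0,1\}$ and $X_1 \equiv 1$, the weighted partial sums $\Sigma_n = \sum_{k=1}^n X_k \mu_k$ are non-decreasing with $\Sigma_n \geq \mu_1 = 1$ almost surely. Substituting this pointwise bound into \eqref{X_n+1} yields
\begin{align*}
E[X_{n+1} \mid \mathcal{F}_n] \;\geq\; \dfrac{p(\beta+1)}{n \cdot \mu_{n+1}}
\end{align*}
almost surely, and summing over $k$ bounds $A_n$ from below by an explicit deterministic sequence.

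Next, I would apply Lemma \ref{lem:Bercu18LemmaVariant} with $x = 0$ and $y = \beta$, noting that $c_k(0) = 1$, to evaluate the resulting sum in closed form:
\begin{align*}
\sum_{k=1}^{n-1} \dfrac{1}{k \cdot \mu_{k+1}} \;=\; \dfrac{1}{-\beta}\left( \dfrac{1}{\mu_n} - 1 \right).
\end{align*}
Combined with the asymptotic $\mu_n \sim n^{\beta}/\Gamma(\beta+1)$ and the fact that $-\beta > 0$, this shows that the lower bound is asymptotic to a positive constant multiple of $n^{-\beta}$. A short comparison then produces a single constant $C = C(\beta,p) > 0$ such that $A_n \geq C n^{-\beta}$ for every $n$: for large $n$ the asymptotic gives the estimate with a constant arbitrarily close to $p(\beta+1)\Gamma(\beta+1)/(-\beta)$, and for the finitely many small values $n < n_0$ the trivial bound $A_n \geq E[X_1] = 1$ together with the fact that $n \mapsto n^{\beta}$ is decreasing (since $\beta < 0$) yields $A_n \geq n_0^{\beta} \cdot n^{-\beta}$.

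With this deterministic lower bound in hand, $-\beta > 0$ forces $A_{\infty} = +\infty$ almost surely, so the conditional Borel--Cantelli dichotomy already invoked in the proof of Lemma \ref{lem:ConsequencesCondBC} delivers $\Xi_n \sim A_n$ as $n \to \infty$, almost surely, completing the proof.

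The only non-routine observation is the very first one, namely that non-negativity of the $X_k$'s together with $X_1 \equiv 1$ produces the uniform lower bound $\Sigma_n \geq 1$; everything else reduces to the telescoping identity in Lemma \ref{lem:Bercu18LemmaVariant} and the conditional Borel--Cantelli argument already used in the proof of Lemma \ref{lem:ConsequencesCondBC}, so I expect no serious obstacle.
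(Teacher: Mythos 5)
Your proposal is correct and follows essentially the same route as the paper: the paper's proof also bounds $E[X_k \mid \mathcal{F}_{k-1}]$ from below by $p\,P(\beta_k=1)$ (which is exactly your observation $\Sigma_{k-1}\geq \mu_1=1$), sums, and uses $\mu_k \sim k^{\beta}/\Gamma(\beta+1)$ together with the conditional Borel--Cantelli dichotomy already invoked for Lemma \ref{lem:ConsequencesCondBC}. Your use of Lemma \ref{lem:Bercu18LemmaVariant} with $x=0$, $y=\beta$ to evaluate the sum exactly, and the explicit adjustment of the constant for small $n$, are just slightly more detailed versions of the same argument.
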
 
\begin{proof} Recall that $P(X_1=1)=1$ and $E[X_1]=1$. 
For each $k=2,3,\ldots$,
\begin{align*}
E[X_k \mid \mathcal{F}_{k-1}] \geq p \cdot P(\beta_k=1) = \dfrac{p(\beta+1)}{k\mu_{k+1}}. 
\end{align*}
The conclusion follows from $\mu_{k+1} \sim k^{\beta}/\Gamma(\beta+1)$ as $k \to \infty$.
\end{proof}


\section{Proof of Theorem \ref{r-thm:PositiveBetabdd}}


Throughout this section we assume that $\beta>0$. First we treat the special case $k=1$. Note that
\begin{align*}
P(X_2=0) &= 1-p \cdot P(\beta_2 =1) = 1-p,
\end{align*}
and
\begin{align*}
P(X_n=0 \mid X_2=\cdots=X_{n-1}=0) = 1- p \cdot P(\beta_n =1)
\end{align*}
for $n=2,3,\ldots$. From \eqref{r-beta1},
\begin{align*}
P(\beta_n =1) = \dfrac{\beta+1}{n-1} \cdot \dfrac{1}{\mu_n} \sim \dfrac{\beta+1}{\Gamma(\beta+1)} \cdot n^{-\beta-1} \quad \mbox{as $n \to \infty$,}
\end{align*}
and we have 
\begin{align*}
P(\Xi_{\infty}=1)=P(\mbox{$\Xi_n=1$ for all $n \in \mathbb{N}$}) 
\geq \prod_{n=2}^{\infty} \{1- p \cdot P(\beta_n =1)\} > 0. 
\end{align*}
For the general case, fix $k \in \mathbb{N}$. First note that
\[ P(X_1=\cdots=X_k=1) \geq \prod_{j=2}^k p \cdot P(\beta_j=1) >0. \]
On the other hand,
\begin{align*}
 P(X_{k+1}=0 \mid X_1=\cdots=X_k=1) &= 1 - p \cdot \sum_{j=1}^k P(\beta_{k+1}=j) \\
 &= 1- \dfrac{p(\beta+1)}{k} \cdot \dfrac{\sum_{j=1}^k \mu_j}{\mu_{k+1}},
\end{align*}
and
\begin{align*}
 &P(X_n=0 \mid X_1=\cdots=X_k=1, \, X_{k+1}=\cdots=X_{n-1}=0) \\
 &\geq 1 - p \cdot \sum_{j=1}^k P(\beta_n=j) = 1- \dfrac{p(\beta+1)}{n-1} \cdot \dfrac{\sum_{j=1}^k \mu_j}{\mu_n}
\end{align*}
for $n \geq k+2$. Thus we have $P(\Xi_{\infty} = k)>0$.
\qed


\section{Proof of Theorem \ref{r-thm:Extinction}}
\label{r-sec:coupling}

Throughout this section we assume that $\beta>0$.
We introduce a multi-type branching process with rate $\{ q(x,y) : x,y \in \mathbb{N}^2 \cap \{x<y\}\}$, where
\begin{align}
q(x,y)= p \cdot P(\beta_y =x) = \frac{p(\beta+1)}{y-1} \cdot\frac{\mu_x}{\mu_y},
\end{align}
where $\mu_n$ was defined in \eqref{r-mu1}.
The process 
is defined as follows:

\vskip 3mm
\begin{itemize}
\item[(1)] The first generation consists of one particle of type $y^{(1)} \equiv 1$. 
\item[(2)] The particle of type $y^{(1)}$ in the first generation gives birth to a second generation particle of type $y^{(2)}> y^{(1)}$ with probability $q(y^{(1)},y^{(2)})$. 
The events of the existence of different types of children and their numbers are independent within themselves as well as independent of each other.
\item[(3)] In case there is no particle in the second generation, we stop. Otherwise each particle of the second generation with type $y^{(2)}$ gives birth to a third generation particle of type $y^{(3)}>y^{(2)}$ with probability $q(y^{(2)}, y^{(3)})$.
The events of the existence of different types of children and their numbers are independent within themselves as well as independent of each other.
\item[(4)] In case there is no particle in the $k$-th generation, we stop.
Otherwise each particle of type $y^{(k)}$ in the $k$-th generation gives birth to a $(k+1)$-th generation particle of type $y^{(k+1)} > y^{(k)}$ with probability $q(y^{(k)}, y^{(k+1)})$,
where $y^{(k)}$ is the type of the parent from the  $k$-th generation. 
Independence of the number and type of children hold as earlier. 
\end{itemize}

To exhibit the coupling of $\{\Xi_n : n \in \mathbb{N}\}$ and the branching process we introduce a modified model. Let $\{ \widehat{\beta}(i,j) : 1\le i <j <\infty\}$ be a collection of $\{0,1\}$-valued independent random variables such that
$$
P\left(\widehat{\beta}(i,j)=1\right)= P(\beta_{j}= i).
$$
We also assume that $\widehat{\beta}(i,j)$ is independent of all random processes considered so far. Let $Y_1=1$ and 
\begin{align}
Y_{n+1} =\begin{cases} 
\displaystyle{\max_{1\le i \le n}}\widehat{\beta}(i,n+1)Y_i  & \text{with probability $p$}
\\
0 & \text{with probability $1-p$.}
\end{cases}
\end{align}
We put 
\[ \widehat{\xi}_n := \{ k \in \mathbb{N} : k \le n,\,Y_k=1  \} \]
and 
\[ \widehat{\Xi}_n := \# \widehat{\xi}_n = \sum_{k=1}^n Y_k. \]
Using the Skorokhod representation theorem and coupling the processes $\widehat{\xi}_n$ and the first $n$ generations of the branching process, we have
\begin{align}\label{;Eb}
\widehat{\Xi}_n  \le \# \left\{ k \in \mathbb{N} :
\begin{array}{@{\,}c@{\,}}
\text{there is a particle of type $k$} \\
\text{in the first $n$ generations} \\
\end{array} \right\}\quad \text{a.s.}
\end{align}
Thus the extinction of the branching process implies 
\[ \widehat{\Xi}_\infty := \lim_{n\to\infty}\widehat{\Xi}_n < \infty\quad \text{a.s.} \]

In this context we first observe that for $\beta>0$, the expected number of children of a particle irrespective of its  type  is a constant. 

\begin{lemma} \label{lem:BranchingMean} Assume that $\beta>0$. For each $k \in \mathbb{N}$, 
\[ \sum_{x=k+1}^{\infty} q(k,x) =  \dfrac{p(\beta+1)}{\beta}. \]
\end{lemma}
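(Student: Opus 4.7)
The plan is to reduce the sum to a telescoping identity already established in \eqref{lem:Bercu18LemmaSeed}, and then use $\beta > 0$ to kill the boundary term at infinity. The key observation is that $q(k,x)$ has the form constant $\cdot \mu_k / ((x-1)\mu_x)$, and using $\mu_n = \Gamma(n+\beta)/(\Gamma(n)\Gamma(\beta+1))$, the factor $1/((x-1)\mu_x)$ becomes proportional to $\Gamma(x-1)/\Gamma(x+\beta)$, which is precisely the ratio of Gamma functions that \eqref{lem:Bercu18LemmaSeed} telescopes.

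First I would unfold the definitions to write
\begin{align*}
\sum_{x=k+1}^{N} q(k,x) &= p(\beta+1) \mu_k \sum_{x=k+1}^{N} \frac{1}{(x-1)\mu_x} = p(\beta+1) \mu_k \Gamma(\beta+1) \sum_{x=k+1}^{N} \frac{\Gamma(x-1)}{\Gamma(x+\beta)}.
\end{align*}
After shifting the summation index (setting $m = x-1$), the sum becomes $\sum_{m=k}^{N-1} \Gamma(m)/\Gamma(m+1+\beta)$, which is of the form $\sum \Gamma(m+a)/\Gamma(m+b)$ with $a=0$, $b = 1+\beta$, so $a-b+1 = -\beta$.

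Next I would apply \eqref{lem:Bercu18LemmaSeed} directly to obtain
\begin{align*}
\sum_{m=k}^{N-1} \frac{\Gamma(m)}{\Gamma(m+1+\beta)} = -\frac{1}{\beta}\left(\frac{\Gamma(N)}{\Gamma(N+\beta)} - \frac{\Gamma(k)}{\Gamma(k+\beta)}\right).
\end{align*}
Here is where $\beta > 0$ enters: by the standard Gamma-ratio asymptotics $\Gamma(N)/\Gamma(N+\beta) \sim N^{-\beta} \to 0$ as $N \to \infty$, so the upper boundary term vanishes. I don't expect any real obstacle; the only thing to be careful about is the index shift and the sign coming from $a-b+1 = -\beta$.

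Finally, passing to the limit and substituting back,
\begin{align*}
\sum_{x=k+1}^{\infty} q(k,x) = p(\beta+1)\mu_k \Gamma(\beta+1) \cdot \frac{1}{\beta} \cdot \frac{\Gamma(k)}{\Gamma(k+\beta)} = \frac{p(\beta+1)}{\beta} \cdot \mu_k \cdot \frac{1}{\mu_k} = \frac{p(\beta+1)}{\beta},
\end{align*}
as desired. Note that the answer is independent of $k$, which is the content of the lemma and the reason it will be useful downstream: the branching process in question is essentially a Galton--Watson process with offspring mean $p(\beta+1)/\beta$, and the critical/subcritical dichotomy $p(\beta+1)/\beta \le 1$ translates exactly to $\beta \ge p/(1-p)$, matching the extinction threshold in Theorem \ref{r-thm:Extinction}.
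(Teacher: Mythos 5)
Your proof is correct and follows essentially the same route as the paper's: unfold $q(k,x)$ into Gamma-function ratios, telescope the partial sum via \eqref{lem:Bercu18LemmaSeed}, and use $\Gamma(N)/\Gamma(N+\beta)\sim N^{-\beta}\to 0$ (valid since $\beta>0$) to discard the boundary term. The only cosmetic difference is your explicit index shift before invoking \eqref{lem:Bercu18LemmaSeed}, which the paper does implicitly.
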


\begin{proof}
For any $K>k$, we have
\begin{align*}
\sum_{x=k+1}^K q(k,x) &= p(\beta+1) \cdot \sum_{x=k+1}^K \dfrac{1}{x-1} \cdot \dfrac{\Gamma(x)\Gamma(\beta+1)}{\Gamma(x+\beta)} \cdot \dfrac{\Gamma(k+\beta)}{\Gamma(k)\Gamma(\beta+1)} \\
&=p(\beta+1) \cdot \dfrac{\Gamma(k+\beta)}{\Gamma(k)} \sum_{x=k+1}^K \dfrac{\Gamma(x-1)}{\Gamma(x+\beta)} \\
&=p(\beta+1) \cdot \dfrac{\Gamma(k+\beta)}{\Gamma(k)} \cdot  \dfrac{1}{-\beta} \left( \dfrac{\Gamma(K)}{\Gamma(K+\beta)} - \dfrac{\Gamma(k)}{\Gamma(k+\beta)} \right) \\
&=\dfrac{p(\beta+1)}{\beta} \cdot \left(1-\dfrac{\Gamma(k+\beta)}{\Gamma(k)} \cdot \dfrac{\Gamma(K)}{\Gamma(K+\beta)}\right),
\end{align*}
where we used \eqref{lem:Bercu18LemmaSeed}. Noting that $\Gamma(K)/\Gamma(K+\beta) \sim K^{-\beta}$ as $K \to \infty$,
we obtain the desired conclusion.
\end{proof}


\begin{remark}
Although the expected number $p(\beta+1)/\beta$ of children of a particle is smaller or equal to one if and only if $\beta \geq p/(1-p)$, we still need to prove that the branching process dies out in this case, because the progeny distribution is different and depends on the type.
\end{remark}

Let
\[ p_0^{(k)} := \prod_{x=k+1}^{\infty} \{1-q(k,x)\} \quad \mbox{for $k\in \mathbb{N}$.}  \]

\begin{lemma} \label{lem:BranchingHideki230804_A}
If $\beta>0$ then $\displaystyle \inf_{k\in \mathbb{N}}  p_0^{(k)} > 0$.
\end{lemma}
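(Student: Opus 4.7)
The plan is to show that $-\log p_0^{(k)}$ is bounded above by a constant independent of $k$, using the fact that the total ``rate'' $\sum_{x>k} q(k,x)$ is uniformly bounded by Lemma \ref{lem:BranchingMean}.

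First I would observe the crucial uniform upper bound $q(k,x) \leq p < 1$ for all admissible $k < x$. This is immediate from the definition $q(k,x)=p \cdot P(\beta_x=k)$, since $P(\beta_x=k) \leq 1$. Consequently each factor $1-q(k,x)$ lies in $[1-p,1]$, so the infinite product $p_0^{(k)}$ is well-defined and strictly positive for each $k$; the only question is uniformity.

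Next I would use the elementary inequality that $t \mapsto -\log(1-t)/t$ is increasing on $[0,1)$ (seen from its power series $\sum_{n \geq 0} t^n/(n+1)$). Since $q(k,x) \leq p$, this yields
\[
-\log\bigl(1-q(k,x)\bigr) \leq \frac{-\log(1-p)}{p} \cdot q(k,x).
\]
Summing over $x>k$ and invoking Lemma \ref{lem:BranchingMean},
\[
-\log p_0^{(k)} = -\sum_{x=k+1}^{\infty} \log\bigl(1-q(k,x)\bigr) \leq \frac{-\log(1-p)}{p} \cdot \frac{p(\beta+1)}{\beta} = \frac{-(\beta+1)\log(1-p)}{\beta},
\]
which is a finite constant depending only on $p$ and $\beta$. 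Hence
\[
\inf_{k \in \mathbb{N}} p_0^{(k)} \geq (1-p)^{(\beta+1)/\beta} > 0.
\]

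There is no real obstacle here: the two inputs, namely the uniform bound $q(k,x) \leq p$ (from the probabilistic interpretation of $q$) and the finiteness of $\sum_x q(k,x)$ (exactly Lemma \ref{lem:BranchingMean}), combine directly via the standard $-\log(1-t) \leq Ct$ comparison. The only mild care needed is to ensure that the comparison constant $C$ does not blow up term by term, which is handled by the uniform bound $q(k,x) \leq p$ that prevents any factor from approaching $0$.
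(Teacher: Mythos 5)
Your proof is correct and is essentially identical to the paper's: both rest on the uniform bound $q(k,x)\leq p$, the comparison $1-t\geq e^{-Ct}$ with $C=-\log(1-p)/p$ on $[0,p]$ (your monotonicity of $-\log(1-t)/t$ is the same inequality), and Lemma \ref{lem:BranchingMean} to bound the total rate, yielding the same explicit lower bound $(1-p)^{(\beta+1)/\beta}$. Nothing to add.
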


\begin{proof} Note that 
\[ 0 \leq q(k,x) \leq p \quad \mbox{for any $k\in \mathbb{N}$ and $x=k+1,k+2,\ldots$.} \]
Let 
\[ C=C(p) := \dfrac{-\log(1-p)}{p}. \]
It is straightforward to see that
\[ 1-t \geq e^{-Ct} \quad \mbox{for $t \in [0,p]$,} \] 
which implies
\[
p_0^{(k)} \geq \exp\left\{-C\sum_{x=k+1}^{\infty} q(k,x)\right\} =  \exp\left\{-\dfrac{Cp(\beta+1)}{\beta}\right\}
\]
for all $k\in \mathbb{N}$.
\end{proof}


Let $N_n$ be the number of particles in the $n$-th generation of the branching process. 
Assume that $\beta>0$, and so $m:= p(\beta+1)/\beta< \infty$. We have
\[ E[N_n] = m^{n-1} \quad \mbox{for $n\in \mathbb{N}$.} \]

\begin{lemma} \label{lem:BranchingHideki230804_B} If $\beta \geq p/(1-p)$ then $\displaystyle \lim_{n \to \infty} P(N_n>0)=0$.
\end{lemma}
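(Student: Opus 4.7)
The plan is to note that the hypothesis $\beta \geq p/(1-p)$ is equivalent to $m := p(\beta+1)/\beta \leq 1$, where $m$ is the common mean of the offspring distribution furnished by Lemma \ref{lem:BranchingMean} (and this value is independent of the parent's type). Since distinct particles reproduce independently, this gives $E[N_{n+1}\mid \mathcal{F}_n] = m N_n$, hence $E[N_n] = m^{n-1}$. Because extinction is absorbing, $\{N_n > 0\}$ is a decreasing family of events, so $\lim_n P(N_n > 0) = P(N_n > 0 \text{ for all } n)$, and it suffices to rule out survival.

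In the subcritical subcase $\beta > p/(1-p)$ (so $m < 1$), the conclusion is immediate from Markov's inequality: $P(N_n > 0) \leq E[N_n] = m^{n-1} \to 0$.

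In the critical subcase $\beta = p/(1-p)$ (so $m = 1$), $\{N_n\}$ is a non-negative, integer-valued martingale. Martingale convergence yields $N_n \to N_\infty$ almost surely with $N_\infty$ an a.s.-finite non-negative integer, and integer-valuedness forces $N_n$ to be eventually constant. It therefore suffices to show $P(N_\infty = c) = 0$ for every fixed $c \geq 1$. The essential input is Lemma \ref{lem:BranchingHideki230804_A}: the one-step childlessness probability $p_0^{(k)}$ is bounded below by $e^{-C}$ (with $C = -\log(1-p)/p$), \emph{uniformly in the type $k$}. Consequently, on $\{N_n = c\}$ and conditionally on $\mathcal{F}_n$, the probability that all $c$ particles are childless is at least $e^{-Cc}$, so
\[
P(N_{n+1} = c \mid \mathcal{F}_n,\, N_n = c) \leq 1 - e^{-Cc} =: \gamma_c < 1.
\]
Iterated conditioning yields $P(N_n = c \text{ for all } n_0 \leq n \leq n_0 + N \mid \mathcal{F}_{n_0},\, N_{n_0} = c) \leq \gamma_c^N$, which tends to zero as $N \to \infty$. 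Thus the event that $N_n$ is eventually equal to $c$ has probability zero, and summing over $c \geq 1$ proves $P(N_\infty \geq 1) = 0$.

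The main step to be careful about is the critical case: the argument relies on combining two ingredients---the fact that a convergent non-negative integer-valued martingale must freeze, and the type-uniform lower bound on $p_0^{(k)}$ from Lemma \ref{lem:BranchingHideki230804_A}. Bypassing this with a direct comparison to a critical single-type Galton--Watson process via a Poisson limit would also work but is more delicate, whereas the childlessness lower bound is already available off the shelf.
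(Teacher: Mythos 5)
Your proof is correct, and in the critical case it follows a genuinely different route from the paper. Both arguments share the same ingredients in the background (the type-independent offspring mean $m=p(\beta+1)/\beta$ from Lemma \ref{lem:BranchingMean}, the identity $E[N_n]=m^{n-1}$, Markov's inequality for the subcritical case $\beta>p/(1-p)$, and the type-uniform childlessness bound $\inf_k p_0^{(k)}>0$ of Lemma \ref{lem:BranchingHideki230804_A}), but they diverge at $\beta=p/(1-p)$. You observe that $m=1$ makes $\{N_n\}$ a non-negative martingale, invoke martingale convergence, use integer-valuedness to conclude that $N_n$ must freeze at some value $N_\infty$, and then rule out freezing at any level $c\geq 1$ because the conditional probability of remaining at $c$ for $N$ consecutive generations is at most $\bigl(1-(\inf_k p_0^{(k)})^c\bigr)^N\to 0$. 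The paper never uses the martingale structure: it writes $P(N_n>0)\leq P(0<N_n\leq K)+1/(K+1)$ via Markov, and kills $P(0<N_n\leq K)$ by contradiction --- if it had a positive $\limsup$, then along a subsequence the disjoint events $A_j=\{0<N_{n_j}\leq K,\ N_{n_j+1}=0\}$ (disjoint because extinction is absorbing) would each have probability at least $\tfrac{\delta}{2}\bigl(\inf_k p_0^{(k)}\bigr)^K$, contradicting countable additivity. Your route is more modular and directly yields $N_\infty=0$ a.s.; the paper's route is more elementary, needing only Markov's inequality and countable additivity rather than martingale convergence. Two cosmetic points: the filtration you call $\mathcal{F}_n$ is not the paper's $\mathcal{F}_n$ (which is generated by $X_1,\ldots,X_n$), so you should introduce the natural filtration of the branching process explicitly; and the explicit constant $e^{-Cc}$ is only the bound of Lemma \ref{lem:BranchingHideki230804_A} when $m=1$ (in general it is $e^{-Cmc}$), though all you need is that $\inf_k p_0^{(k)}>0$.
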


\begin{proof}
If $\beta>p/(1-p)$ then $m <1$ by Lemma \ref{lem:BranchingMean}, and by Markov's inequality, 
\[ P(N_n>0) \leq E[N_n] = m^{n-1} \to 0 \quad \mbox{as $n \to \infty$.} \]
Hereafter we assume that $\beta=p/(1-p)$, i.e. $m=1$. For any $K \in \mathbb{N}$, again by Markov's inequality,
\begin{align*}
 P(N_n>0) &= P(0<N_n \leq K) + P(N_n>K) \\
 &\leq P(0<N_n \leq K) + \dfrac{1}{K+1}.
\end{align*}
To obtain the desired conclusion it suffices to show that
\[ \mbox{for any $K \in \mathbb{N}$}, \quad  \lim_{n \to \infty} P(0<N_n \leq K)=0. \]
Assume that
\[ \limsup_{n \to \infty} P(0<N_n \leq K) = \delta > 0.\]
We can find a subsequence $\{n_j\}$ with $1 \leq n_1 < n_2<\cdots$ and 
\[P(0<N_{n_j} \leq K) \geq \dfrac{\delta}{2} \quad \mbox{for $j \in \mathbb{N}$.}  \]
For $j \in \mathbb{N}$, define
\[ A_j := \{ 0<N_{n_j} \leq K, \, N_{n_j+1} = 0 \}.\]
Noting that $\{A_j : j \in \mathbb{N}\}$ are mutually disjoint, 
\begin{align*}
1 \geq P\left( \bigcup_{j=1}^{\infty} A_j \right) = \sum_{j=1}^{\infty} P(A_j) 
\geq \sum_{j=1}^{\infty} \dfrac{\delta}{2} \cdot \left( \inf_{k \in \mathbb{N}}  p_0^{(k)}\right)^K = +\infty.
\end{align*}
This is a contradiction.
\end{proof}

Thus, from Lemma \ref{lem:BranchingHideki230804_B}, we  have that the multi-type branching process dies out if and only if $m \le 1$, i.e. $\beta \ge p/(1-p)$.
Hence,
\begin{align}
P\left(\widehat{\Xi}_\infty  < \infty\right) =1 \quad \text{whenever $\beta \ge \frac{p}{1-p}$. }
\end{align}
For any $n\geq 1$, we have that $\# \widehat{\xi}_n \leq n$ a.s. and so 
\begin{align}
P\left( \left. \widehat{\Xi}_\infty  < \infty \,\right|\,\widehat{\xi}_n\right)=1\quad \text{a.s.}
\end{align}

Next, we consider the relation between $\{\Xi_n\}$ and $\{\widehat{\Xi}_n\}$.
We put 
\[ B_i = B_{i,n}:= \{ \widehat{\beta}(i,n+1)=1\}. \]
Then using the inclusion-exclusion formula, for any $I \subset \{1,\ldots,n\}$,
\[
0 \le \sum_{i\in I}P(B_i)- P\left(\bigcup_{i\in I}B_i\right) \le \sum_{i,j\in I: i\not=j}P(B_i\cap B_j), 
\]
and noting that 
$$
\left\{\max_{i\in I}  \widehat{\beta}(i,n+1) =1\right\} = \bigcup_{i\in I}B_i, \quad \{\beta_{n+1}\in I\} = \bigcup_{i \in I} \{ \beta_{n+1} =i \},
$$
and $P(B_i)=P(\beta_{n+1} =i)$,
we have
\[
0 \le  P( \beta_{n+1}\in I) - P\left(\displaystyle{\max_{i\in I}  \widehat{\beta}(i,n+1) =1} \right) \le \sum_{i,j\in I: i\not=j}P(B_i\cap B_j). 
\]
Now
\begin{align*}
\sum_{i,j\in I: i\not=j}P(B_i\cap B_j) &= \sum_{i,j \in I: i\not=j}P(B_i)P(B_j)\\
& \leq \left(\# I\right)^2\left(\frac{p(\beta+1)}{n\mu_{n+1}}\max_{1\le i \le n}\mu_i\right)^2\\
& \leq \frac{\left(\# I \right)^2}{n^2} \left(p(\beta+1)\right)^2 \\
& \leq \frac{\left(p(\beta+1)\right)^2 }{n^{1+\delta}}\quad \text{if } \# I \leq n^{(1-\delta)/2} \text{ for some } \delta > 0.
\end{align*}
Thus, for $I \subset \mathbb{N}$ with $\# I \le n^{(1-\delta)/2}$ for some $\delta >0$, we have
\[
0 \leq P( \beta_{n+1}\in I) - P\left(\max_{i\in I} \widehat{\beta}(i,n+1)=1 \right) \le \frac{\left(p(\beta+1)\right)^2}{n^{1+\delta}}.
\]
If 
\begin{align}\label{;2.7cond}
\sum_{n=1}^\infty P(\Xi_n \ge n^{(1-\delta)/2}) 
< \infty,
\end{align}
then for any $\varepsilon \in (0, 1)$,  there exists $N \in \mathbb{N}$ such that
\begin{align}\label{;2.7}
P( \Xi_n \le n^{(1-\delta)/2}  \text{ for all $n \ge N$} ) \ge 1-\varepsilon.
\end{align}
Hence, on $\Omega_{N}:=\{   \Xi_n \le n^{(1-\delta)/2}  \text{ for all $n \ge N$}  \}$,
\begin{align}
&\sum_{n=N}^\infty \left\{ P( \beta_{n+1}\in \xi_n \mid \xi_n) - P\left( \left. \max_{i\in \xi_n} \widehat{\beta}(i,n+1)=1 \,\right|\, \xi_n\right) \right\} \notag \\
&\le \sum_{n=N}^\infty \frac{\left(p(\beta+1)\right)^2}{n^{1+\delta}}, \label{;2s}
\end{align}
where $\xi_n = \{ k \in \mathbb{N}:k \le n ,\,  X_k=1 \}$.
Let $\widehat{\xi}_n$ be as above.
We now obtain a relation between
$\{ (\xi_n, X_n) \}_{n\in\mathbb{N}}$ and $ \{(\widehat{\xi}_n, Y_n)\}_{n\in\mathbb{N}}$.  Note that,
for any $n\in \mathbb{N}$ and $\zeta \subset \{1,2,\dots,n\}$,
\begin{align}\notag
&P( X_{n+1}\not= Y_{n+1} \mid \xi_n = \widehat{\xi}_n=\zeta) 
\\ \label{;c_X}
&= P( \beta_{n+1}\in\zeta) 
- P\left( \max_{i \in \zeta} \widehat{\beta}(i,n+1)=1 \right),
\end{align}
and
\begin{align*}
&P(\xi_{n+i} = \widehat{\xi}_{n+i} \mbox{ for all $i\ge 1$} \mid \xi_n = \widehat{\xi}_n=\zeta)
\\
&=P(\xi_{n+i} = \widehat{\xi}_{n+i}\mbox{ for all $i\ge 2$}\mid \xi_{n+1}=\widehat{\xi}_{n+1}) P(X_{n+1}= Y_{n+1}\mid \xi_n = \widehat{\xi}_n=\zeta)
\\
&=P(X_{n+1}= Y_{n+1}\mid \xi_n = \widehat{\xi}_n=\zeta)\prod_{i=1}^\infty P(X_{n+i+1}= Y_{n+i+1}\mid \xi_{n+i} = \widehat{\xi}_{n+i}).
\end{align*}
Thus, from (\ref{;2s}) and (\ref{;c_X}), for any $\varepsilon >0$,
we can find $N_1\in \mathbb{N}$ such that, for any $n\ge N_1$,
\begin{align}
P(\xi_{n+i} = \widehat{\xi}_{n+i} \mbox{ for all $i \ge 1$} \mid \xi_n = \widehat{\xi}_n =\zeta) \ge 1-\varepsilon.
\end{align}
Hence
\begin{align*}
&P(\Xi_{\infty} <\infty\mid  \xi_{N_1}=\zeta)
\\
&\ge P(\Xi_{\infty} <\infty, \xi_\ell = \widehat{\xi}_\ell \mbox{ for all $\ell > N_1$} \mid  \xi_{N_1}=\widehat{\xi}_{N_1} =\zeta)
\\
&= P(\Xi_{\infty} <\infty, \xi_\ell = \widehat{\xi}_\ell \mbox{ for all $\ell > N_1$} \mid  \xi_{N_1}=\widehat{\xi}_{N_1} =\zeta)
\\
&\ge P(\Xi_{\infty} <\infty \mid \widehat{\xi}_{N_1} =\zeta)
- P(\xi_\ell \not= \widehat{\xi}_\ell \text{ for some $\ell > N_1$} \mid \xi_{N_1}=\widehat{\xi}_{N_1} =\zeta)
\\
&\ge 1- \varepsilon.
\end{align*}
From (\ref{;2.7}) and the above we have
\begin{align*}
P(\Xi_{\infty} < \infty)
&\ge \sum_{\zeta: \# \zeta \le N_1^{(1-\delta)/2}} 
P(\Xi_{\infty} <\infty\mid  \xi_{N_1}=\zeta) P(\xi_{N_1}=\zeta)
\ge (1-\varepsilon)^2.
\end{align*}
As $\varepsilon>0$ is arbitrary, we complete the proof of $P(\Xi_{\infty} < \infty)=1$ under \eqref{;2.7cond}.
We will later see in \eqref{eq:asympE[(Xi_n)^3]crit} of Section \ref{sec:MomentsCritical}, for the critical case $\beta=p/(1-p)$, \eqref{;2.7cond} is satisfied. \qed

\section{Moments for the critical case}
\label{sec:MomentsCritical}


\begin{proposition} \label{prop:MixedMomentsCritical} Assume that $p \in (0,1)$ and $\beta=p/(1-p)$. For $k \in \{1,2,3\}$ and $\ell \in \{0,1,\ldots,k\}$, we have that
\begin{align} \label{eq:MixedMomentCritical}
E\left[ \left(\Xi_n\right)^{k-\ell} \left(\Sigma_n\right)^{\ell} \right] \sim C_{k,\ell} \cdot n^{\ell\beta} (\log n)^{2k-1-\ell} \quad \mbox{as $n \to \infty$,}
\end{align}
where $C_{k,\ell}$ is a positive constant.
\end{proposition}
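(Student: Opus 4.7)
The plan is to derive an exact recursion for the mixed moments $u_{k,\ell}(n) := E[\Xi_n^{k-\ell}\Sigma_n^{\ell}]$ and to extract $C_{k,\ell}$ by induction, proceeding on $k$ and, for each $k$, on decreasing $\ell$. Using $\Xi_{n+1} = \Xi_n + X_{n+1}$, $\Sigma_{n+1} = \Sigma_n + X_{n+1}\mu_{n+1}$, the identity $X_{n+1}^j = X_{n+1}$ for $j \geq 1$ (since $X_{n+1} \in \{0,1\}$), and the critical-case form $E[X_{n+1}\mid \mathcal{F}_n] = \frac{\beta}{n\mu_{n+1}}\Sigma_n$ of \eqref{X_n+1}, a binomial expansion yields
\begin{equation*}
u_{k,\ell}(n+1) - u_{k,\ell}(n) = \frac{\beta}{n\mu_{n+1}}\sum_{\substack{0 \leq i \leq k-\ell,\; 0 \leq j \leq \ell \\ (i,j) \neq (0,0)}}\binom{k-\ell}{i}\binom{\ell}{j}\mu_{n+1}^j\,u_{k-i-j+1,\,\ell-j+1}(n).
\end{equation*}
The term $(i,j) = (0,1)$ contributes exactly $\frac{\beta\ell}{n}u_{k,\ell}(n)$, and every other term brings in a moment $u_{k',\ell'}$ with either $k' < k$, or $k' = k$ and $\ell' = \ell+1$, so the system is triangular in the order (decreasing $k$, then increasing $\ell$).

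The base case $k=1$ is already known: $u_{1,1}(n) = c_n(\beta) \sim n^\beta/\Gamma(\beta+1)$ by \eqref{eq:EofSigma_nAsymp}, and $u_{1,0}(n) \sim \beta \log n$ by Corollary \ref{cor:EofXi_n}(ii). For $k \in \{2,3\}$ and $\ell \geq 1$, the identity $c_{n+1}(\ell\beta)/c_n(\ell\beta) = 1 + \beta\ell/n$ absorbs the $(0,1)$ contribution and converts the recursion into the telescoping relation
\begin{equation*}
W_{k,\ell}(n+1) - W_{k,\ell}(n) = \frac{R_{k,\ell}(n)}{c_{n+1}(\ell\beta)}, \qquad W_{k,\ell}(n) := \frac{u_{k,\ell}(n)}{c_n(\ell\beta)},
\end{equation*}
where $R_{k,\ell}(n)$ is the sum restricted to $(i,j) \neq (0,0),(0,1)$; for $\ell=0$ the original recursion is already a telescope. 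Substituting the inductive asymptotic $u_{k',\ell'}(n) \sim C_{k',\ell'}n^{\ell'\beta}(\log n)^{2k'-1-\ell'}$ together with $\mu_{n+1} \sim n^\beta/\Gamma(\beta+1)$ into each summand shows that the generic $(i,j)$-term has size of order $n^{\ell\beta-1}(\log n)^{2k-2i-j-\ell}$, so the dominant contributions come from $2i+j = 2$, namely $(i,j) = (1,0)$ (when $\ell \leq k-1$) and $(i,j) = (0,2)$ (when $\ell \geq 2$). This yields $R_{k,\ell}(n)/c_{n+1}(\ell\beta) \sim D_{k,\ell}(\log n)^{2k-2-\ell}/n$ with an explicit positive constant $D_{k,\ell}$ depending only on the previously determined $C_{k,\ell+1}$ and $C_{k-1,\ell-1}$. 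Summation gives $W_{k,\ell}(n) \sim D_{k,\ell}(\log n)^{2k-1-\ell}/(2k-1-\ell)$ and multiplying by $c_n(\ell\beta) \sim n^{\ell\beta}/\Gamma(\ell\beta+1)$ produces \eqref{eq:MixedMomentCritical} with $C_{k,\ell} = D_{k,\ell}/[(2k-1-\ell)\Gamma(\ell\beta+1)] > 0$.

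The main obstacle will be the bookkeeping: for each of the nine pairs $(k,\ell)$ with $k \in \{1,2,3\}$ one must isolate, among several summands of $R_{k,\ell}$, the dominant $(1,0)$ and/or $(0,2)$ contributions and check that the resulting coefficient $D_{k,\ell}$ is strictly positive. In the boundary cases $\ell = k$ only $(0,2)$ contributes, and in the cases $\ell \in \{0,1\}$ only $(1,0)$ does, but in each situation the surviving coefficient is a product of positive binomial, gamma, and (inductively) $C$-factors, so $D_{k,\ell}>0$ in every case. Performing the iteration in the order $(1,1), (1,0), (2,2), (2,1), (2,0), (3,3), (3,2), (3,1), (3,0)$ therefore produces \eqref{eq:MixedMomentCritical} with explicit positive $C_{k,\ell}$.
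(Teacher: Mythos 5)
Your proposal is correct and follows essentially the same route as the paper: the same moment recursions $x_{n+1}=(1+\xi/n)x_n+f_n$, normalization by $c_n(\xi)$ and telescoping (the paper's Lemma \ref{lem:ForMixedMomentCritical}), and the same iteration order $(1,1),(1,0),(2,2),\ldots,(3,0)$, plugging previously obtained asymptotics into the forcing term. The only difference is presentational: you package the seven conditional-expectation computations into one binomial-expansion recursion for $u_{k,\ell}$ and identify the dominant $(i,j)$ pairs, whereas the paper writes each case out explicitly.
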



To prove Proposition \ref{prop:MixedMomentsCritical}, we need the following lemma.

\begin{lemma} \label{lem:ForMixedMomentCritical} Let $\xi \geq 0$. Assume that a sequence $\{x_n\}$ satisfies 
\begin{align}
x_1=c,\quad x_{n+1} = \left(1+\dfrac{\xi}{n}\right) x_n + f_n\quad \mbox{for $n \in \mathbb{N}$,} \label{Recursion:ForMixedMomentCritical}
\end{align}
where 
\begin{align}
\dfrac{f_n}{c_{n+1}(\xi)} \sim \dfrac{C (\log n)^m}{n} \quad \mbox{as $n \to \infty$} \label{eq:ForMixedMomentCritical}
\end{align}
for some $C>0$ and $m \in \mathbb{Z}_+$. Then there exists a constant $K=K(\xi,m)>0$ such that
\begin{align*}
x_n \sim Kn^{\xi} (\log n)^{m+1}\quad \mbox{as $n \to \infty$.} 
\end{align*}
\end{lemma}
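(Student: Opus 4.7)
The plan is to reduce the inhomogeneous linear recurrence to a telescoping sum by dividing through by the sequence $c_n(\xi)$, which exactly solves the homogeneous part. From the definition \eqref{def:c_n(xi)},
\[
 \frac{c_{n+1}(\xi)}{c_n(\xi)} = \frac{n+\xi}{n} = 1 + \frac{\xi}{n},
\]
so setting $y_n := x_n / c_n(\xi)$ and substituting into \eqref{Recursion:ForMixedMomentCritical} immediately gives
\[
 y_{n+1} = y_n + \frac{f_n}{c_{n+1}(\xi)}.
\]

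Since $c_1(\xi)=1$ we have $y_1 = c$, and telescoping yields
\[
 y_n = c + \sum_{k=1}^{n-1} \frac{f_k}{c_{k+1}(\xi)}.
\]
By hypothesis \eqref{eq:ForMixedMomentCritical}, each summand satisfies $f_k/c_{k+1}(\xi) \sim C(\log k)^m/k$, and comparison with $\int^n (\log t)^m/t\, dt$ gives the elementary asymptotic
\[
 \sum_{k=2}^{n-1} \frac{(\log k)^m}{k} \sim \frac{(\log n)^{m+1}}{m+1} \longrightarrow \infty.
\]
Because the dominant partial sum diverges, a standard Stolz--Ces\`{a}ro-type argument transfers the termwise asymptotic equivalence to the partial sums, while the constant $c$ together with any finitely many initial terms contribute only $O(1)$, which is swamped. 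Hence $y_n \sim C(\log n)^{m+1}/(m+1)$.

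Finally, multiplying back by $c_n(\xi) \sim n^\xi/\Gamma(\xi+1)$ produces
\[
 x_n = c_n(\xi)\cdot y_n \sim \frac{C}{(m+1)\Gamma(\xi+1)}\, n^\xi (\log n)^{m+1},
\]
so one may take $K(\xi,m) = C/[(m+1)\Gamma(\xi+1)] > 0$. There is no serious obstacle: the only step requiring a moment of care is the passage from the termwise asymptotic to the asymptotic for the partial sums, which is legitimate precisely because the dominant series diverges (so boundary contributions, as well as the initial datum $c$, are absorbed into lower-order terms).
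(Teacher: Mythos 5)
Your proof is correct and follows essentially the same route as the paper: divide by $c_n(\xi)$ to reduce the recursion to a telescoping sum, apply the hypothesis together with $\sum_{k\le n}(\log k)^m/k \sim (\log n)^{m+1}/(m+1)$, and multiply back using $c_n(\xi)\sim n^{\xi}/\Gamma(\xi+1)$. The explicit constant $K=C/[(m+1)\Gamma(\xi+1)]$ you obtain matches what the paper's argument yields implicitly.
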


\begin{proof} Putting $y_n := x_n/c_{n+1}(\xi)$, we have that
\begin{align*}
y_n &= y_1 + \sum_{k=1}^{n-1} (y_{k+1}-y_k) 
= \dfrac{c}{c_{n+1}(\xi)} + \sum_{k=1}^{n-1} \dfrac{f_k}{c_{k+1}(\xi)}.
\end{align*}
From \eqref{eq:ForMixedMomentCritical},
\begin{align*}
y_n \sim \dfrac{C(\log n)^{m+1}}{m+1}\quad \mbox{as $n \to \infty$.}
\end{align*}
Since $c_{n+1}(\xi) \sim n^{\xi}/\Gamma(\xi+1)$ as $n \to \infty$, we obtain the desired conclusion. \end{proof}

\begin{proof}[Proof of Proposition \ref{eq:MixedMomentCritical}] Assume that $\beta=p/(1-p)$, and so $p(\beta+1)=\beta$.
For the case $k=1$, Eq. \eqref{eq:EofSigma_nAsymp} and Corollary \ref{cor:EofXi_n} (ii) implies \eqref{eq:MixedMomentCritical} with
\begin{align*}
C_{1,0} = \beta \quad \mbox{and} \quad C_{1,1} = \dfrac{1}{\Gamma(\beta)}.
\end{align*}

We turn to the case $k=2$. Since
\begin{align*}
E[(\Sigma_{n+1})^2 \mid \mathcal{F}_n] 
=\left(1+\dfrac{2\beta}{n} \right) \cdot (\Sigma_n)^2 + \dfrac{\beta \mu_{n+1}}{n} \cdot \Sigma_n,
\end{align*}
we have that $x_n=E[(\Sigma_n)^2]$ satisfies \eqref{Recursion:ForMixedMomentCritical} with $c=1$, $\xi=2\beta$ and
\begin{align*}
f_n = \dfrac{\beta \mu_{n+1} E[\Sigma_n]}{n}.
\end{align*}
From \eqref{eq:EofSigma_nAsymp}, we can see that \eqref{eq:ForMixedMomentCritical} holds with $m=0$. Lemma \ref{lem:ForMixedMomentCritical} implies that 
\begin{align}
E[(\Sigma_n)^2] \sim C_{2,2} n^{2\beta} \log n\quad \mbox{as $n \to \infty$.}
\label{eq:asympE[(Sigma_n)^2]crit}
\end{align}
To obtain the asymptotics of $E[\Xi_n \Sigma_n]$, note that
\begin{align*}
E[\Xi_{n+1}\Sigma_{n+1} \mid \mathcal{F}_n] 
=\left(1+\dfrac{\beta}{n} \right) \cdot \Xi_n \Sigma_n + \dfrac{\beta}{n \mu_{n+1}} \cdot (\Sigma_n)^2 + \dfrac{\beta}{n} \cdot \Sigma_n.
\end{align*}
We see that $x_n=E[\Xi_n \Sigma_n]$ satisfies \eqref{Recursion:ForMixedMomentCritical} with $c=1$, $\xi=\beta$ and
\begin{align*}
f_n = \dfrac{\beta E[(\Sigma_n)^2]}{n \mu_{n+1}} + \dfrac{\beta E[\Sigma_n]}{n}.
\end{align*}
From \eqref{eq:EofSigma_nAsymp} and \eqref{eq:asympE[(Sigma_n)^2]crit}, we have that \eqref{eq:ForMixedMomentCritical} holds with $m=1$. Lemma \ref{lem:ForMixedMomentCritical} implies that 
\begin{align}
E[\Xi_n \Sigma_n] \sim C_{2,1} n^{\beta} (\log n)^2\quad \mbox{as $n \to \infty$.}
\label{eq:asympE[Xi_nSigma_n]crit}
\end{align}
As for $E[(\Xi_n)^2]$, we have
\begin{align*}
E[(\Xi_{n+1})^2 \mid \mathcal{F}_n] 
=(\Xi_n)^2 + \dfrac{2\beta}{n\mu_{n+1}} \cdot \Xi_n \Sigma_n + \dfrac{\beta}{n\mu_{n+1}} \cdot \Sigma_n.
\end{align*}
Since $x_n=E[(\Xi_n)^2]$ satisfies $c=1$, $\xi=0$ and
\begin{align*}
f_n = \dfrac{2\beta E[\Xi_n \Sigma_n]}{n\mu_{n+1}} + \dfrac{\beta E[\Sigma_n]}{n\mu_{n+1}}.
\end{align*}
From \eqref{eq:EofSigma_nAsymp} and \eqref{eq:asympE[Xi_nSigma_n]crit}, we can see that \eqref{eq:ForMixedMomentCritical} holds with $m=2$. Lemma \ref{lem:ForMixedMomentCritical} implies that 
\begin{align}
E[(\Xi_n)^2] \sim C_{2,0}  (\log n)^3\quad \mbox{as $n \to \infty$.}
\label{eq:asympE[(Xi_n)^2]crit}
\end{align}

The case $k=3$ can be handled in a similar manner. 
As 
\begin{align*}
E[(\Sigma_{n+1})^3 \mid \mathcal{F}_n] 
&=\left(1+\dfrac{3\beta}{n} \right) \cdot (\Sigma_n)^3 + \dfrac{3\beta\mu_{n+1}}{n} \cdot (\Sigma_n)^2+  \dfrac{3\beta\mu_{n+1}^2}{n} \cdot \Sigma_n,
\end{align*}
we have that $x_n=E[(\Sigma_n)^3]$ satisfies \eqref{Recursion:ForMixedMomentCritical} with $c=1$, $\xi=3\beta$ and
\begin{align*}
f_n = \dfrac{3\beta\mu_{n+1}E[(\Sigma_n)^2]}{n} +  \dfrac{3\beta\mu_{n+1}^2 E[\Sigma_n]}{n}.
\end{align*}
From \eqref{eq:EofSigma_nAsymp} and \eqref{eq:asympE[(Sigma_n)^2]crit}, we can see that \eqref{eq:ForMixedMomentCritical} holds with $m=1$. Lemma \ref{lem:ForMixedMomentCritical} implies that 
\begin{align}
E[(\Sigma_n)^3] \sim C_{3,3}  n^{3\beta} (\log n)^2 \quad \mbox{as $n \to \infty$.}
\label{eq:asympE[(Sigma_n)^3]crit}
\end{align}
Next, noting that
\begin{align*}
E[\Xi_{n+1}  (\Sigma_{n+1})^2 \mid \mathcal{F}_n] 
&=\left(1+\dfrac{2\beta}{n} \right) \cdot \Xi_n(\Sigma_n)^2 + \dfrac{\beta \mu_{n+1}}{n} \cdot \Xi_n \Sigma_n \\
&\quad + \dfrac{\beta}{n \mu_{n+1}} \cdot (\Sigma_n)^3 +\dfrac{2\beta}{n} \cdot (\Sigma_n)^2+   \cdot \dfrac{\beta \mu_{n+1}}{n} \cdot \Sigma_n,
\end{align*}
we have $x_n=E[\Xi_n(\Sigma_n)^2]$ satisfies \eqref{Recursion:ForMixedMomentCritical} with $c=1$, $\xi=2\beta$ and
\begin{align*}
f_n = \dfrac{\beta \mu_{n+1} E[\Xi_n \Sigma_n]}{n}
+ \dfrac{\beta E[(\Sigma_n)^3]}{n \mu_{n+1}}  +\dfrac{2\beta E[(\Sigma_n)^2]}{n} + \dfrac{\beta \mu_{n+1} E[\Sigma_n]}{n}.\end{align*}
From \eqref{eq:EofSigma_nAsymp}, \eqref{eq:asympE[(Sigma_n)^2]crit}, \eqref{eq:asympE[Xi_nSigma_n]crit} and \eqref{eq:asympE[(Sigma_n)^3]crit}, we can see that 
\eqref{eq:ForMixedMomentCritical} holds with $m=2$. Lemma \ref{lem:ForMixedMomentCritical} implies that 
\begin{align}
E[\Xi_n(\Sigma_n)^2] \sim C_{3,2}  n^{2\beta} (\log n)^3 \quad \mbox{as $n \to \infty$.}
\label{eq:asympE[Xi_n(Sigma_n)^2]crit}
\end{align}
Since
\begin{align*}
E[(\Xi_{n+1})^2  \Sigma_{n+1} \mid \mathcal{F}_n] 
&=\left(1+\dfrac{\beta}{n} \right) \cdot (\Xi_n)^2  \Sigma_n + \dfrac{2\beta}{n \mu_{n+1}} \cdot \Xi_n (\Sigma_n)^2 \\
&\quad  +   \dfrac{2\beta}{n} \cdot \Xi_n \Sigma_n
+    \dfrac{\beta}{n \mu_{n+1}}\cdot (\Sigma_n)^2 + \dfrac{\beta}{n}\cdot \Sigma_n,
\end{align*}
we have $x_n=E[(\Xi_n)^2 \Sigma_n]$ satisfies \eqref{Recursion:ForMixedMomentCritical} with $c=1$, $\xi=\beta$ and
\begin{align*}
f_n = \dfrac{2\beta E[\Xi_n (\Sigma_n)^2]}{n \mu_{n+1}} +   \dfrac{2\beta E[\Xi_n \Sigma_n]}{n} 
+    \dfrac{\beta E[(\Sigma_n)^2]}{n \mu_{n+1}}  + \dfrac{\beta E[\Sigma_n]}{n}.
\end{align*}
From \eqref{eq:EofSigma_nAsymp}, \eqref{eq:asympE[(Sigma_n)^2]crit}, \eqref{eq:asympE[Xi_nSigma_n]crit} and \eqref{eq:asympE[Xi_n(Sigma_n)^2]crit}, we can see that 
\eqref{eq:ForMixedMomentCritical} holds with $m=3$. Lemma \ref{lem:ForMixedMomentCritical} implies that 
\begin{align}
E[(\Xi_n)^2\Sigma_n] \sim C_{3,1}  n^{\beta} (\log n)^4 \quad \mbox{as $n \to \infty$.}
\label{eq:asympE[(Xi_n)^2Sigma_n]crit}
\end{align}
Finally, noting that
\begin{align*}
&E[(\Xi_{n+1})^3  \mid \mathcal{F}_n] \\
&=(\Xi_n)^3 +\dfrac{3\beta}{n \mu_{n+1}} \cdot (\Xi_n)^2 \Sigma_n  +   \dfrac{3\beta}{n \mu_{n+1}} \cdot \Xi_n \Sigma_n + \dfrac{\beta}{n \mu_{n+1}} \cdot \Sigma_n,
\end{align*}
we have $x_n=E[(\Xi_n)^3]$ satisfies \eqref{Recursion:ForMixedMomentCritical} with $c=1$, $\xi=0$ and
\begin{align*}
f_n = \dfrac{3\beta E[(\Xi_n)^2\Sigma_n]}{n \mu_{n+1}}   +  \dfrac{3\beta E[\Xi_n \Sigma_n ]}{n \mu_{n+1}} \cdot + \dfrac{\beta E[\Sigma_n]}{n \mu_{n+1}}.
\end{align*}
From \eqref{eq:EofSigma_nAsymp},
\eqref{eq:asympE[Xi_nSigma_n]crit} and \eqref{eq:asympE[(Xi_n)^2Sigma_n]crit}, we can see that 
\eqref{eq:ForMixedMomentCritical} holds with $m=4$. Lemma \ref{lem:ForMixedMomentCritical} implies that 
\begin{align}
E[(\Xi_n)^3] \sim C_{3,0} (\log n)^5 \quad \mbox{as $n \to \infty$.}
\label{eq:asympE[(Xi_n)^3]crit}
\end{align}
This completes the proof.
\end{proof}

\section{Conclusions}
\label{sec:refinements}

In Section \ref{r-sec:coupling} we introduced a comparison with the multi-type branching process.
Here we introduce another comparison with the LERW, and obtain some different bounds.

Assume that $-1<\beta<0$. By \eqref{eq:CondEX_n+1}, 
\begin{align}
P( X_{n+1} = 1 \mid \mathcal{F}_n) &= p \cdot  \dfrac{\beta+1}{n\mu_{n+1}} \cdot \sum_{k=1}^n X_k \mu_k \notag \\
&\geq \dfrac{p(\beta+1)}{n} \cdot \dfrac{\mu_n}{\mu_{n+1}}\cdot \sum_{k=1}^n X_k \notag \\
&= \dfrac{p(\beta+1)}{n} \cdot \dfrac{n}{n+\beta} \cdot \Xi_n \geq p(\beta+1) \cdot \dfrac{\Xi_n}{n}.
\label{eq:compareLERW}
\end{align}
Consider the LERW $\{\Xi'_n=\sum_{i=1}^n X'_i\}$ defined by
\begin{align}
\label{r-lazydefModified}
 X'_1 \equiv 1,\quad X'_{n+1} = \begin{cases}
X'_{U_n} & \text{with probability }p(\beta+1)\\
0 &  \text{with probability }1-p(\beta+1).
\end{cases}
\end{align}
We have
\begin{align}
  P( X'_{n+1}= 1 \mid \mathcal{F}'_n) = p(\beta+1) \cdot \dfrac{\Xi'_n}{n} \quad \mbox{for $n \in \mathbb{N}$},
  \label{eq:compareLERW_2}
\end{align}
where $\mathcal{F}'_n$ is the $\sigma$-algebra generated by $X'_1,\ldots,X'_n$. By \eqref{eq:compareLERW} and \eqref{eq:compareLERW_2}, we can construct a coupling such that with probability one,
\begin{align}
 \Xi_n \geq \Xi'_n
\quad \mbox{for all $n \in \mathbb{N}$,}
\label{eq:CouplingLERWneg}
\end{align}
and so with probability one, there exists a positive constant $C$ such that
\begin{align}
\label{r-Xi_n:bound}
\Xi_n \geq C n^{p(1+\beta)} \quad \mbox{for all $n \in \mathbb{N}$.}
\end{align}
Note that $-1<\beta<0$ and $p(1+\beta) > -\beta$ if and only if $-p/(1+p)<\beta<0$. The above argument gives a better lower bound than Lemma \ref{lem:NegativeBetaUnbdd}.

We believe that Lemma \ref{lem:NegativeBetaUnbdd} and the argument leading to \eqref{r-Xi_n:bound} can be combined to show that, for $-p/(1+p)<\beta<0$, with probability one, there exists a positive constant $C$ such that
\[ 
\Xi_n \geq C n^{p(1+\beta)-\beta} \quad \mbox{for all $n \in \mathbb{N}$.} \]

\begin{remark}
For $\beta>0$, using a similar argument leading to \eqref{eq:CouplingLERWneg} we can show that, with probability one,
\[ \Xi_n \leq \Xi'_n
\quad \mbox{for all $n \in \mathbb{N}$,} \]
and there exists a positive constant $C$ such that
\[ 
\Xi_n \leq C n^{p(1+\beta)} \quad \mbox{for all $n \in \mathbb{N}$.} \]
Note that $p(1+\beta)<1/2$ if $p<1/2$ and $0<\beta<(1-2p)/2p$. As $(1-2p)/2p \geq p/(1-p)$ is equivalent to $p \leq 1/3$, 
\[ \mbox{if $p \leq \dfrac{1}{3}$ then $p(1+\beta)<\dfrac{1}{2}$ for all $\beta \in \left(0,\dfrac{p}{1-p}\right)$.} \]
In view of \eqref{;2.7}, we hope this is useful for comparison with the branching process. 
\end{remark}

The results obtained in this paper are summarized in Table \ref{table1}. 

\begin{table}[ht]
\centering
\begin{tabular}{|c||c|c|} \hline
 Regime & Asymptotic behaviour \\ \hline \hline
$-1<\beta<0$ & $P(\mbox{$\Xi_n\geq C n^{-\beta}$ for all $n$})=1$. \\
& $P\left(
\mbox{$M_{\infty}>0$, $\Xi_n \sim C(p,\beta) M_{\infty} n^{p (\beta+1)-\beta}$ as $n \to \infty$}
\right)>0$. 
\\ \hline
$\beta=0$ & $P\left(\mbox{$M_{\infty}>0$, $\Xi_n \sim  C(p,0) M_{\infty} n^p$ as $n \to \infty$}\right)=1$. \\ \hline
$0<\beta<\dfrac{p}{1-p}$ &  $P(\Xi_{\infty} < + \infty)>0$, \\
& $P\left(\mbox{$M_{\infty}>0$, $\Xi_n \sim   C(p,\beta) M_{\infty} n^{p (\beta+1)-\beta}$ as $n \to \infty$}\right)>0$. 
\\ \hline
$\beta \geq \dfrac{p}{1-p}$ & $P(\Xi_{\infty} < + \infty)=1$. \\ \hline
\end{tabular}
\medskip
\caption{Summary of the results}\label{table1}
\end{table}

\end{document}